\theoremstyle{plain}
\newtheorem{theorem}{Theorem}[section]
\newtheorem{proposition}[theorem]{Proposition}%[section]
\newtheorem{lemma}[theorem]{Lemma}%[section]
\newtheorem*{conjecture*}{Conjecture}
\theoremstyle{definition}
\newtheorem{definition}[theorem]{Definition}
\newtheorem*{definition*}{Definition}%[section]
\newtheorem{example}[theorem]{Example}%[section]
\newtheorem*{example*}{Example}
\newtheorem*{notation*}{Notation}
\newtheorem*{notation-conv*}{Notation and convention}
\newtheorem*{convention*}{Convention}
\theoremstyle{remark}
\newtheorem{remark}[theorem]{Remark}%[sectio]
\newtheorem*{remark*}{Remark}%[sectio]
\newcommand{\Z}{\mathbb{Z}}
\newcommand{\Bl}{\text{Bl}}
\begin{document}

%%%%%%%%%%%%%%%%%%%%%%%%%%%%%%%%%%%%%%%%%% 
% title 
%%%%%%%%%%%%%%%%%%%%%%%%%%%%%%%%%%%%%%%%%% 

\title[]{
  Concordance maps in HFK$^{-}$
}

%%%%%%%%%%%%%%%%%%%%%%%%%%%%%%%%%%%%%%%%%% 
% author names and addresses
%%%%%%%%%%%%%%%%%%%%%%%%%%%%%%%%%%%%%%%%%% 
\author{Lev Tovstopyat-Nelip}

\address{Department of Mathematics, 
  Boston College, 
  Chestnut Hill, Massachussetts 02467}
\email{tovstopy@bc.edu}

%% \date{\today}

%%%%%%%%%%%%%%%%%%%%%%%%%%%%%%%%%%%%%%%%%% 
% Subject classification 
%%%%%%%%%%%%%%%%%%%%%%%%%%%%%%%%%%%%%%%%%% 
\keywords{concordance, contact, sutured, knot Floer homology, $HFK^{-}$}
\subjclass[2010]{57M27; 57R58}

%%%%%%%%%%%%%%%%%%%%%%%%%%%%%%%%%%%%%%%%%% 
% abstract
%%%%%%%%%%%%%%%%%%%%%%%%%%%%%%%%%%%%%%%%%% 
\begin{abstract}
  We show that a decorated knot concordance $\mathcal{C}$ from $K_0$ to $K_1$
  induces an $\mathbb{F}[U]$-module homomorphism \[G_{\mathcal{C}}: HFK^{-}(-S^3,K_0) \to HFK^{-}(-S^3,K_1)\] which preserves the Alexander and absolute $\mathbb{Z}_2$-Maslov gradings. Our construction generalizes
  the concordance maps induced on $\widehat{HFK}$
  studied by Juh\'{a}sz and Marengon \cite{JM2}, but uses the description of $HFK^{-}$ as a direct limit of maps between sutured Floer homology groups discovered by Etnyre, Vela-Vick, and Zarev \cite{EVZ}. \end{abstract}

%%%%%%%%%%%%%%%%%%%%%%%%%%%%%%%%%%%%%%%%%%%%%%%%%%%%%%%%%%%%%%%%%%%% 
% body of paper
%%%%%%%%%%%%%%%%%%%%%%%%%%%%%%%%%%%%%%%%%%%%%%%%%%%%%%%%%%%%%%%%%%%% 

\maketitle
%% \tableofcontents

\section{Introduction}
Knot Floer homology is an invariant of knots which categorifies the Alexander polynomial. It was defined independently by Oszv\'{a}th and Szab\'{o} \cite {OS2} and Rasmussen \cite{R}. The simplest version of knot Floer homology is $\widehat{HFK}$, a finitely generated $\mathbb{F}$-vector space\footnote{We use $\mathbb{F}=\mathbb{F}_2$ coefficients throughout this paper.}. This invariant is functorial with respect to decorated knot cobordisms according to Juh\'{a}sz \cite{cob}. The maps between knot Floer groups induced by concordances preserve the Alexander and absolute $\mathbb{Q}$-Maslov gradings. A more powerful flavor of knot Floer homology is $HFK^{-}$, a finitely generated $\mathbb{F}[U]$-module. The goal of this paper is to define maps on $HFK^{-}$ associated to decorated concordances, turning this flavor of knot Floer homology into a functor.

Juh\'{a}sz's maps between knot Floer groups are defined by considering certain cobordisms of sutured manifolds; this approach uses the observation that $\widehat{HFK}$ is naturally isomorphic to the sutured Floer homology of the knot complement with two meridional sutures \cite{sutured}. In \cite{EVZ}, Etnyre, Vela-Vick and Zarev prove that $HFK^{-}$ is isomorphic to a direct limit of maps between sutured Floer homology groups associated to the knot complement. They denote this limit by $\underrightarrow{SFH}$.
Our approach to defining concordance maps on $HFK^{-}$ is to use the sutured cobordism maps defined by Juh{\'a}sz in combination with the sutured Floer-theoretic description of $HFK^{-}$ in \cite{EVZ}. 

Our main result is the following:
\begin{theorem}\label{first}
A decorated concordance $\mathcal{C} = (F,\sigma)$ from $(K_0,P_0)$ to $(K_1,P_1)$ induces an $\mathbb{F}[U]$-module homomorphism
\[G_\mathcal{C}: HFK^{-}(-S^3,K_0)\to HFK^{-}(-S^3,K_1)\] which preserves the Alexander and absolute $\mathbb{Z}_2$-Maslov gradings.
\end{theorem}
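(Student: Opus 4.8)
The plan is to assemble $G_{\mathcal{C}}$ from Juhász's sutured cobordism maps \cite{cob} and then pass to the direct limit supplied by \cite{EVZ}. Write $Y_i = S^3\setminus N(K_i)$ for the knot complements and recall from \cite{EVZ} that
\[ HFK^{-}(-S^3,K_i)\;\cong\;\underrightarrow{SFH}(K_i)\;=\;\varinjlim_{n} SFH(Y_i,\Gamma^i_n), \]
where $\{\Gamma^i_n\}_{n\ge 1}$ is the EVZ family of sutures on $\partial Y_i$, the connecting maps $\phi^i_n\colon SFH(Y_i,\Gamma^i_n)\to SFH(Y_i,\Gamma^i_{n+1})$ are the gluing maps constructed there, and the $U$-action on the limit is induced by this system. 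Given $\mathcal{C}=(F,\sigma)$, I would first delete an open tubular neighborhood of $F$ from $S^3\times[0,1]$: the complement $W=(S^3\times[0,1])\setminus N(F)$ is a cobordism from $Y_0$ to $Y_1$ whose vertical boundary is a subsurface of $\partial N(F)$ carrying a contact structure determined by $\sigma$. Because $F$ is a product near its two ends and $\sigma$ is collared there, one can wind $\sigma$ near the ends so that the induced dividing set agrees with $\Gamma^i_n$ along $\partial Y_i$; this produces, for each $n$, a sutured cobordism $\mathcal{W}_n(\mathcal{C})$ from $(Y_0,\Gamma^0_n)$ to $(Y_1,\Gamma^1_n)$, and \cite{cob} then gives $\mathbb{F}$-linear maps $\Phi_n\colon SFH(Y_0,\Gamma^0_n)\to SFH(Y_1,\Gamma^1_n)$. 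For the first suture in the family, where $SFH\cong\widehat{HFK}$, this is the concordance map of Juhász and Marengon \cite{JM2}.

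The crux is that the $\Phi_n$ constitute a morphism of direct systems, i.e.\ that
\[ \Phi_{n+1}\circ\phi^0_n \;=\; \phi^1_n\circ\Phi_n \qquad\text{for every } n. \]
To prove this I would first identify each connecting map $\phi^i_n$ with the sutured cobordism map of an explicit cobordism $\mathcal{V}^i_n$ from $(Y_i,\Gamma^i_n)$ to $(Y_i,\Gamma^i_{n+1})$ — topologically a product $Y_i\times[0,1]$ carrying a contact handle (bypass) attached in a neighborhood of the boundary torus. The two composites $\mathcal{V}^1_n\circ\mathcal{W}_n(\mathcal{C})$ and $\mathcal{W}_{n+1}(\mathcal{C})\circ\mathcal{V}^0_n$ are sutured cobordisms from $(Y_0,\Gamma^0_n)$ to $(Y_1,\Gamma^1_{n+1})$ that differ only by sliding the EVZ handle — supported near the peripheral torus — through the product collar of $W$ along the vertical boundary; hence they are diffeomorphic rel boundary as sutured cobordisms-with-contact-structure. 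The displayed identity then follows from the composition law for the maps of \cite{cob} (over $\mathbb{F}_2$ there is no sign or unit ambiguity). Since $\Phi_\bullet$ is a morphism of direct systems it induces a map $G_{\mathcal{C}}\colon HFK^{-}(-S^3,K_0)\to HFK^{-}(-S^3,K_1)$ on the limits, and because the $U$-action is built from these same systems, $G_{\mathcal{C}}$ is automatically $\mathbb{F}[U]$-linear; that $G_{\mathcal{C}}$ depends only on the decorated concordance up to the appropriate equivalence follows from the corresponding invariance property of \cite{cob}.

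It remains to track the gradings, following \cite{JM2}. Each $\Phi_n$ splits along relative $\mathrm{Spin}^c$ structures on $(Y_i,\Gamma^i_n)$, and since $\mathcal{C}$ is a concordance — $F$ an annulus, $\chi(F)=0$, $g(F)=0$ — the resulting $\mathrm{Spin}^c$ correspondence respects the Alexander grading exactly, while Juhász's degree-shift formula (whose terms involve $\chi(W)$ and the Euler characteristic of the vertical surface) evaluates to an even integer, so $\Phi_n$ preserves the absolute $\mathbb{Z}_2$-Maslov grading. These normalizations are compatible with the $\phi^i_n$ in a manner that is identical for $i=0,1$, so the grading statements descend to the direct limit, giving the theorem.

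The commutativity of the squares is the step I expect to be the main obstacle. The delicate points are realizing the EVZ connecting maps as cobordism maps in the sense of \cite{cob}, verifying the handle interchange while keeping careful track of the contact structures on the vertical boundaries, and arranging the winding of $\sigma$ near the ends so that all the $\mathcal{W}_n(\mathcal{C})$ fit together coherently (this may force the use of a slightly more flexible, cofinal notion of a morphism of direct systems). The loss of the $\mathbb{Q}$-Maslov grading available on $\widehat{HFK}$ in \cite{JM2} reflects that the absolute $\mathbb{Q}$-grading is not readily transported through the $\underrightarrow{SFH}$ description of $HFK^{-}$.
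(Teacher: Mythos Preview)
Your overall architecture matches the paper's, but there is a genuine gap in your treatment of $\mathbb{F}[U]$-linearity. In the EVZ construction the direct limit is taken over the negative bypass maps $\phi_-$, while the $U$-action on $\underrightarrow{SFH}$ is induced by a \emph{different} family of bypass maps $\phi_+$, corresponding to the two distinct tight contact structures on the basic slice $T^2\times I$. Commuting with $\phi_-$ produces a well-defined $\mathbb{F}$-linear map on the limit, but $U$-equivariance is not automatic: one must separately verify that the squares with $\phi_+$ also commute. The paper proves both commutative diagrams, and it is the second that yields $\mathbb{F}[U]$-linearity. Your handle-slide argument would apply equally well to $\phi_+$, so the repair is easy once the distinction is recognized, but as written your sentence ``the $U$-action on the limit is induced by this system'' misdescribes EVZ, and the conclusion that $G_{\mathcal{C}}$ is ``automatically $\mathbb{F}[U]$-linear'' is unjustified.

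Two smaller points of comparison. For the commutativity squares the paper does not build an explicit diffeomorphism of the composite cobordisms rel boundary; it observes that each $\phi_\pm$ is a boundary-cobordism map and each level map a special-cobordism map, and invokes the general fact from \cite{cob} that a sutured cobordism decomposes as special-then-boundary or as boundary-then-special with the same induced map. For the absolute $\mathbb{Z}_2$-Maslov grading the paper does not evaluate a degree-shift formula on the $\mathcal{W}_n$: it first transports the JM2 handle-attachment lemmas to show the level maps preserve the \emph{relative} $\mathbb{Z}_2$-grading, and then pins down the absolute grading by attaching a thickened contact $2$-handle to $\mathcal{W}_i$, obtaining the trivial cobordism of $-(S^3\setminus B^3)$ and hence $\pi_*\circ G_{\mathcal{C}}=\pi_*$ where $\pi_*$ sets $U=1$. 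Your formula-based approach would have to be made precise for the non-meridional sutures $\Gamma_i$, where an absolute $\mathbb{Q}$-grading compatible with the limit is not readily available --- which is precisely why the paper only claims $\mathbb{Z}_2$.
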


 Our maps are natural extensions of those defined in \cite{cob}. If $F_\mathcal{C} : \widehat{HFK}(-S^3,K_0)\to\widehat{HFK}(-S^3,K_1)$ is the map induced by a decorated concordance $\mathcal{C}$, then we have the following commutative diagram (see Proposition \ref{behave} and the following Remark):

\[
\xymatrix{
HFK^{-}(-S^3,K_0) \ar[d]^{p_*} \ar[r]^{G_\mathcal{C}} &HFK^{-}(-S^3,K_1)\ar[d]^{p_*}\\
\widehat{HFK}(-S^3,K_0) \ar[r]^{F_\mathcal{C}} &\widehat{HFK}(-S^3,K_1)}
\]
where the maps $p_{*}$ are induced by settings $U=0$ on the chain level.

While putting the final touches on this article, Zemke \cite{Z} posted a paper containing another approach to defining link cobordism maps on $HFK^{-}$. See also Alishahi-Eftekhary in recent days \cite{AE}. Our approach is quite different from those two. Among other things, it is more contact-geometric in nature. One of our hopes is that it may be better suited to understanding the relationship between Lagrangian concordances and knot Floer homology. More precisely, we conjecture that the Legendrian invariant in $HFK^{-}$ defined in \cite{LOSS} should behave functorially with respect to the map induced by a Lagrangian concordance between two Legendrian knots; see Baldwin-Sivek \cite{BS} for a version of this result in the monopole Floer analogue of $\widehat{HFK}$.

We outline the structure of the paper. In Section \ref{sec:cobmapssfh} we review the $SFH$ TQFT as in \cite{cob}. In Section \ref{sec:concmapshfk} we review the maps induced by decorated concordances on $\widehat{HFK}$ as defined in \cite{JM1}. In Section \ref{sec:sfhlim} we review the construction of $\underrightarrow{SFH}$. In section five we finally define the maps induced by decorated concordances on $HFK^{-}$ and prove some commutative diagrams involving maps to $\widehat{HFK}$ and $\widehat{HF}$. In Section \ref{sec:gradings} we show that our maps preserve the Alexander and the absolute Maslov $\mathbb{Z}_2$-gradings.

%\begin{remark}
%Although $\underrightarrow{SFH}(-S^3,K)$ inherits an absolute $\mathbb{Q}$ grading from $HFK^{-}(-S^3,K)$, this grading does not yet admit a sutured construction. We believe it may be possible to extend the results of \cite{GH} to obtain an absolute grading of $SFH$ by relative homotopy classes of plane fields. This would allow us to recover a $\mathbb{Q}$ grading on each term of the direct limit (atleast for torsion relative $Spin^{\mathbb{C}}$ structures). The way in which bypass attachments affect the homotopy classes of plane fields has been studied in \cite{Huang}, this should help in recovering an absolute $\mathbb{Q}$ grading on $\underrightarrow{SFH}$.
%\end{remark}

\section*{Acknowledgments}
The author is grateful to his advisor, John Baldwin, for many helpful conversations. The author also thanks John Etnyre for an interesting discussion.

\section{Cobordism maps in sutured Floer homology}
\label{sec:cobmapssfh}

We begin by introducing sutured manifolds and cobordisms between them, and then review maps induced by cobordisms on sutured Floer homology. Consult \cite{cob} for a complete treatment. 

\subsection{Cobordisms of sutured manifolds}
\begin{definition}[{\cite[Definition~2.6]{Gabai}}]

A \emph{sutured manifold} $(M,\Gamma)$ is a compact oriented $3$-manifold $M$ with nonempty boundary, together with an oriented compact subsurface $\Gamma\subset \partial M$ which is the union of disjoint annuli. The oriented core of each annulus is called a \emph{suture}. Each component of $R(\Gamma) = \partial M \setminus \Gamma$ is oriented so that $\partial R(\Gamma)$ is compatible with the sutures. $R_+(\Gamma)$ (or $R_- (\Gamma)$) denotes the components of $R(\Gamma)$ whose normal vectors point out of (or into) $M$.
\end{definition}

\begin{definition}
A sutured manifold $(M,\Gamma)$ is \emph{balanced} if $M$ has no closed components,
$\chi(R_+(\Gamma))=\chi(R_-(\Gamma))$, and each boundary component of $M$ has atleast one suture.
\end{definition}

We view~$\Gamma$ as a ``thickened'' oriented 1-manifold. We will only be interested in connected sutured manifolds having connected boundary.

\begin{definition}
\label{def:equivalent}
Let $\xi_0$ and $\xi_1$ be contact structures on $(M,\Gamma)$ such that $\partial M$ is convex with dividing set $\Gamma$ with respect to both contact structures.
We say that $\xi_0$ and $\xi_1$ are \emph{equivalent} if there is a family of contact structures $\{\xi_t : t\in I\}$ interpolating between $\xi_0$ and $\xi_1$ such that $\partial M$ is convex with dividing set $\Gamma$ with respect to each contact structure in the family. We let $\xi_0\sim \xi_1$ denote equivalence, and let $[\xi]$ denote the equivalence class of $\xi$.
\end{definition}

\begin{definition}
A \emph{cobordism of sutured manifolds} from $(M_0,\Gamma_0)$ to $(M_1, \Gamma_1)$ consists of a triple $\mathcal{W}=(W, Z, [\xi])$, where

\begin{itemize}
  \setlength{\itemsep}{1pt}
  \setlength{\parskip}{0pt}
  \setlength{\parsep}{0pt}
\item{$W$ is a oriented compact $4$-manifold with nonempty boundary,}
\item{$Z \subseteq \partial W$ is a compact $3$-manifold with nonempty boundary, whose complement in $\partial W$ consists of the disjoint union $-\accentset{\circ}{M_0} \sqcup \accentset{\circ}{M_1}$.
$Z$ is a sutured manifold with sutures $\Gamma_0 \cup \Gamma_1$,}
\item{$\xi$ is a positive contact structure on $Z$ such that $\partial Z$ is convex with dividing set $\Gamma_i$ on $\partial M_i$ for $i \in \{0,1\}$.}
\end{itemize}
\end{definition}

\begin{definition}
Let $\mathcal{W}=(W, Z, [\xi])$ and $\mathcal{W'}=(W', Z', [\xi'])$ be sutured cobordisms from $(M_0, \Gamma_0)$
to $(M_1, \Gamma_1)$. $\mathcal{W}$ and $\mathcal{W}'$ are called \emph{equivalent} if there is an orientation preserving diffeomorphism $d: W\to W'$ which carries $(Z,\xi)$ to $(Z',\xi')$ and restricts to the identity on $M_0 \sqcup M_1$.
\end{definition}

\begin{definition}
A cobordism $\mathcal{W} = (W, Z, [\xi])$ between balanced sutured manifolds $(M_0, \Gamma_0)$ and $(N, \Gamma_1)$
is called a \emph{boundary cobordism} if there exists a deformation retraction $r$ of $W$ to $M_0\cup (-Z)$ such that $r_1|_N : N\to M_0\cup (-Z)$
is an orientation preserving diffeomorphism.
\end{definition}

\begin{definition}
A cobordism $\mathcal{W} = (W, Z, [\xi])$ between balanced sutured manifolds $(M_0, \Gamma_0)$ and $(M_1, \Gamma_1)$
is called \emph{special} if
\begin{enumerate}
\item{$Z = \partial M_0 \times I$ is the trivial cobordism from $\partial M_0$ to $\partial M_1$;}
\item{$\xi$ is an $I$-invariant contact structure on $Z$, each $\partial M_0 \times \left\{t\right\}$ is
convex with respect to the contact vector field $\partial/\partial t$ and has dividing set $\Gamma_0 \times \left\{t\right\}$.} In particular, $\Gamma_0 = \Gamma_1$.
\end{enumerate}
\end{definition}

\begin{remark}\label{remark1}
It is noted in \cite{cob} that every sutured cobordism can be seen as the composition of a boundary
cobordism and a special cobordism as follows, cf.~\cite[Definition~10.1]{cob}.
Let $\mathcal{W} = (W, Z, [\xi])$ be a cobordism of balanced sutured manifolds from $(M_0, \Gamma_0)$ to
$(M_1, \Gamma_1)$.

Let $(N,\Gamma_1)$ be the sutured manifold $(M_0 \cup (-Z), \Gamma_1)$.
Then we can think of the cobordism $\mathcal{W}$ as a composition $\mathcal{W}^s \circ \mathcal{W}^b$,
where $\mathcal{W}^b$ is a boundary cobordism from $(M_0, \Gamma_0)$ to $(N, \Gamma_1)$
and $\mathcal{W}^s$ is a special cobordism from $(N,\Gamma_1)$ to $(M_1, \Gamma_1)$.
\end{remark}

\begin{remark}\label{remark2}
Moreover, every sutured cobordism can be seen as the composition of first a special cobordism and then a boundary cobordism. A special cobordism can be thought of as a trace cobordism of a sequence of handle attachments whose attaching regions lie in the interior of the sutured manifold. The cobordism $\mathcal{W}^s$ above is obtained from attaching handles to $(N,\Gamma_1)$. Noting that the interior of $(N,\Gamma_1)$ is the same as that of $(M_0,\Gamma_0)$, attaching these same handles to $(M_0,\Gamma_0)$ yields a special cobordism $\widetilde{\mathcal{W}^s}$ to $(\widetilde{N},\Gamma_0)$. Now we may think of $\mathcal{W}$ as a composition $\widetilde{\mathcal{W}^b}\circ\widetilde{\mathcal{W}^s}$, where $\widetilde{\mathcal{W}^b}$ is a boundary cobordism from 
$(\widetilde{N},\Gamma_0)$ to $(M_1,\Gamma_1)$. Also note that $\overline{(M_1,\Gamma_1)\setminus(\widetilde{N},\Gamma_0)} \approx \overline{(N,\Gamma_1)\setminus(M_0,\Gamma_0)}\approx -Z$.

\end{remark}

\begin{remark}(\cite[Remark~2.13]{cob})\label{remark3}
Given a cobordism $\mathcal{W} = (W,Z,[\xi])$ from $(M_0, \Gamma_0)$ to
$(M_1, \Gamma_1)$, consider $\overline{\mathcal{W}} = (W,Z,[-\xi])$. Viewing
$\partial W$ as $-(-M_1)\cup Z \cup (-M_0)$ we see that $\overline{\mathcal{W}}$
is a cobordism from $(-M_1,-\Gamma_1)$ to $(-M_0,-\Gamma_0)$. We refer to
$\overline{\mathcal{W}}$ as $\mathcal{W}$ ``turned upside down".

\end{remark}

\subsection{Induced maps on sutured Floer homology}

In \cite{sutured}, Juh\'{a}sz defines $SFH(M,\Gamma)$, the \emph{sutured Floer homology} of a balanced sutured manifold $(M,\Gamma)$. $SFH(M,\Gamma)$ is an $\mathbb{F}$-vector space which splits over the relative Spin$^c$ structures on $(M,\Gamma)$:
\[
SFH(M,\Gamma) = \bigoplus _{\mathfrak{s} \in \  Spin^c (M,\Gamma)} SFH(M,\Gamma,s).
\]
Sutured Floer homology generalizes both $\widehat{HF}$ defined in \cite{OS1} and $\widehat{HFK}$ defined in \cite{OS2} and \cite{R}.

The next two examples are \textbf{Theorem 2.10} and \textbf{Theorem 2.11} from \cite{cob}.
\begin{example}
Let $Y$ be a closed, oriented, 3 manifold. Let $B^3\subset Y$ be an open neighborhood of some point $p\in Y$ and $\Gamma$ consist of a single suture on the boundary of $Y\setminus B^3$; then $SFH(Y\setminus B^3,\Gamma)$ is canonically isomorphic to $\widehat{HF}(Y)$.  \\
\end{example}

A relative $Spin^{\mathbb{C}}$ structure on $(Y,\Gamma)$, is a homology class of vector fields which obey certain boundary conditions, namely the vector fields must point out of $Y$ on all of $R_+{\Gamma}$, into $Y$ on all of $R_-{\Gamma}$, and along $\Gamma$ they flow in the positive $\Gamma$ direction. The space of such vector fields on $\partial Y$ is contractible, so we may fix a vector field $v_0$ on the boundary. If $\partial Y \approx T^2$ and $\Gamma$ consists of two parallel curves, we may choose $v_0$ so that $v_0 ^\perp$ (fix an auxiliary metric) induces a standard characteristic foliation on the torus (see section 2.1 of \cite{EVZ} for the notion of such a foliation). In this case, $v_0^\perp$ is a trivial bundle, so it admits a section $t_0$. Define $c_1(s,t_0)$, the relative Chern class of $s$ with respect to $t_0$, as the obstruction of extending $t_0$ to a nonzero section of $v^\perp$ where $v$ is a vector field representing $s$. There is a natural map from the chain complex of sutured Floer homology to relative $Spin^\mathbb{C}$ structures. See section 2.5 of \cite{EVZ} for a nice review of relative $Spin^\mathbb{C}$ structures.

\begin{example}\label{EX}

Let $K\subset Y$ be a null-homologous smooth knot. Let $Y(K)$ denote the the complement of a neighborhood of $K$ in $Y$ and $\Gamma_\mu$ denote a pair of oppositely oriented meridonal sutures on $\partial Y(K)$; then $SFH(Y(K),\Gamma_\mu)$ is canonically isomorphic to $\widehat{HFK}(Y,K)$. The Alexander grading of a generator $x$ (of the sutured Floer chain complex) may be written as follows
\[
A_{[S,\partial S]}(x) = \frac{1}{2}\langle c_1 (s(x),t_\mu), [S,\partial S]\rangle
\]
where $S$ is a Seifert surface for the knot $K$, $s(x)\in Spin^\mathbb{C}(Y(K),\Gamma_\mu)$ is the relative $Spin^\mathbb{C}$ structure associated to the generator, and $t_\mu$ is a nice vector field along the boundary which has the desired properties with respect to $\Gamma_\mu$. Whenever $Y$ is a $\mathbb{Q}HS^3$, all Seifert surfaces are homologous in the knot exterior, so in this setting the grading is independent of the choice of a Seifert surface.

\end{example}

\begin{theorem}[{\cite[Theorem~11.12]{cob}}]\label{tqft}
Sutured Floer homology defines a functor from the category of balanced sutured manifolds and equivalence classes of cobordisms between them to the category \textbf{Vect}$_{\mathbb{F}}$, which is a $(3+1)$-dimensional
TQFT in the sense of \emph{\cite{Atiyah}} and \emph{\cite{Blanchet}}. 
\end{theorem}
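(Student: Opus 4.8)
The plan is to reduce an arbitrary sutured cobordism to elementary pieces, define $SFH$-maps for those pieces, and then verify that the composite is well defined, functorial, and monoidal. By Remark~\ref{remark1} every cobordism $\mathcal{W}=(W,Z,[\xi])$ from $(M_0,\Gamma_0)$ to $(M_1,\Gamma_1)$ factors as $\mathcal{W}^s\circ\mathcal{W}^b$, with $\mathcal{W}^b$ a boundary cobordism onto $(N,\Gamma_1)=(M_0\cup(-Z),\Gamma_1)$ and $\mathcal{W}^s$ a special cobordism; and a special cobordism is the trace of a finite sequence of $0$-, $1$-, $2$-, and $3$-handle attachments whose attaching regions lie in the interior. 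So it suffices to define $F_{\mathcal{W}}$ on these elementary cobordisms and then compose.

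For the boundary cobordism I would take $F_{\mathcal{W}^b}\colon SFH(M_0,\Gamma_0)\to SFH(N,\Gamma_1)$ to be the Honda--Kazez--Mati\'{c} gluing map associated to the convex contact structure $\xi$ on $-Z$ (restricting to the dividing set $\Gamma_0$ along $\partial M_0$): at the chain level one extends a sutured Heegaard diagram for $(M_0,\Gamma_0)$ by a partial-open-book diagram for $\xi$ and counts holomorphic polygons, and the map multiplies a generator by the contact class of $\xi$. The essential feature is that this depends only on the equivalence class $[\xi]$, since isotopic convex contact structures induce equal gluing maps --- which is exactly what Definition~\ref{def:equivalent} is built to exploit. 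For the handle attachments I would use the standard maps: a $0$-handle adjoins a disjoint product sutured ball with $SFH\cong\mathbb{F}$ and acts by $x\mapsto x\otimes c$ on a generator; a $3$-handle is this turned upside down (Remark~\ref{remark3}) and acts by the dual projection; a $1$-handle is realized on Heegaard diagrams by adding a one-handle to the surface together with a canceling pair of curves; and a $2$-handle is the turned-upside-down $1$-handle, equivalently a triangle-counting surgery map of Ozsv\'{a}th--Szab\'{o} type. Then $F_{\mathcal{W}}$ is the composite of all these pieces.

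The heart of the argument, and the step I expect to be the main obstacle, is well-definedness: $F_{\mathcal{W}}$ must not depend on (i) the Heegaard diagrams and almost complex structures used for the elementary maps, (ii) the chosen handle decomposition of $\mathcal{W}^s$, or (iii) the chosen splitting of $\mathcal{W}$ into a boundary part and a special part. Point (i) follows from the naturality of $SFH$ under elementary Heegaard moves. Point (ii) is a Cerf-theory argument: any two handle decompositions of a fixed special cobordism are connected by finitely many handle slides, creations and annihilations of canceling handle pairs, and isotopies of attaching regions, so one checks that each elementary handle map is invariant under these and that handle maps with disjoint attaching regions commute. Point (iii) requires comparing two factorizations of the same $\mathcal{W}$: one shows the boundary part is determined up to equivalence by $\mathcal{W}$, and that a contact gluing map commutes with a handle attachment map whenever the handle is attached away from the gluing region, so handles may be slid across the ``seam'' between $\mathcal{W}^b$ and $\mathcal{W}^s$ without changing the composite. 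This last compatibility is the delicate point, since it forces the holomorphic-polygon-counting handle maps and the contact-geometric gluing maps to interact coherently.

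Granting well-definedness, the rest is comparatively formal. Functoriality $F_{\mathcal{W}'\circ\mathcal{W}}=F_{\mathcal{W}'}\circ F_{\mathcal{W}}$ follows by concatenating a decomposition of $\mathcal{W}$ with one of $\mathcal{W}'$ to get a decomposition of the composite and invoking (ii)--(iii), and the identity cobordism is a special cobordism with no handles whose $I$-invariant contact structure has gluing map the identity, so $F_{\mathrm{id}}=\mathrm{id}$. For the $(3{+}1)$-dimensional TQFT structure in the sense of Atiyah and Blanchet, I would finally verify the monoidal axioms: disjoint union of balanced sutured manifolds is sent to the tensor product of their $SFH$ groups, with the $0$- and $3$-handle maps furnishing the unit and counit, the empty sutured manifold is sent to $\mathbb{F}$, and all of this is compatible with cobordism composition --- each reducing to the analogous statement about connected sums of Heegaard diagrams.
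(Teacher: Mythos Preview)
The paper does not supply its own proof of this theorem; it is quoted verbatim from Juh\'{a}sz~\cite[Theorem~11.12]{cob} and immediately followed by an outline of the \emph{construction} of $\mathcal{F}_{\mathcal{W}}$, not a proof of functoriality or well-definedness. That outline is: factor $\mathcal{W}=\mathcal{W}^s\circ\mathcal{W}^b$ as in Remark~\ref{remark1}, define $\mathcal{F}_{\mathcal{W}^b}$ to be the Honda--Kazez--Mati\'{c} gluing map $\phi_{-\xi}$, write $\mathcal{W}^s=\mathcal{W}_3\circ\mathcal{W}_2\circ\mathcal{W}_1$ as traces of index-$i$ handle attachments, define $\mathcal{F}_{\mathcal{W}_i}$ analogously to the closed Heegaard Floer case, and set $\mathcal{F}_{\mathcal{W}}=\mathcal{F}_{\mathcal{W}^s}\circ\mathcal{F}_{\mathcal{W}^b}$.

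Your proposal recovers exactly this construction and then correctly identifies the substantive content of the theorem---independence of the handle decomposition via Cerf theory, and compatibility of the contact gluing maps with the handle maps---as the hard part, which is where the actual work in~\cite{cob} lies. Two small discrepancies with the paper's outline: the paper (following~\cite{cob}) uses only index $1$, $2$, $3$ handles for the special cobordism, not $0$-handles, since the sutured manifolds in play are assumed to have no closed components; and the gluing map is written as $\phi_{-\xi}$ rather than as a map associated to $\xi$ on $-Z$, though these amount to the same thing. Otherwise your sketch is an accurate summary of Juh\'{a}sz's strategy, and there is nothing further in the present paper to compare it against.
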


Following \cite{cob}, we outline the construction of the map on sutured Floer homology induced by a cobordism of balanced sutured manifolds.
Let $\mathcal{W} = (W,Z,[\xi])$ be a cobordism of balanced sutured manifolds between $(M_0,\Gamma_0)$ and $(M_1,\Gamma_1)$ such that $Z$ is connected. We may view the cobordism as a composition $\mathcal{W} = \mathcal{W}^s\circ \mathcal{W}^b$ (see Remark \ref{remark1}). $\mathcal{W}^b$ is a boundary cobordism from $(M_0,\Gamma_0)$ to $(N,\Gamma_1)$, and $\mathcal{W}^s$ is a special cobordism from $(N,\Gamma_1)$ to $(M_1,\Gamma_1)$. Since $(M_0,\Gamma_0)$ is a sutured submanifold of $(N,\Gamma_1)$ in the sense of \cite{HKM}, hence there is a contact gluing map \[\phi_{-\xi} : SFH(M_0,\Gamma_0)\to SFH(N,\Gamma_1).\] The map associated to $\mathcal{W}^b$, $\mathcal{F}_{\mathcal{W}^b}$, is defined to be the contact gluing map $\phi_{-\xi}$.

The special cobordism $\mathcal{W}^s$ can be written as $\mathcal{W}_3\circ\mathcal{W}_2\circ\mathcal{W}_1$ where $\mathcal{W}_i$ is the trace of a collection of index i handle attachments. The map $\mathcal{F}_{\mathcal{W}_i}$ associated to a trace cobordism of sutured manifolds is defined analogously to how maps associated to cobordisms are defined for Heegaard Floer homology in \cite{OS1}. The map associated to a trace cobordism is defined as $\mathcal{F}_{\mathcal{W}^s} = \mathcal{F}_{\mathcal{W}_3}\circ\mathcal{F}_{\mathcal{W}_2}\circ \mathcal{F}_{\mathcal{W}_1}$. Finally, we define the map associated to $\mathcal{W}$, $\mathcal{F}_\mathcal{W}$, as the composition $\mathcal{F}_{\mathcal{W}^s}\circ \mathcal{F}_{\mathcal{W}^b}$. Each map defined above admits refinements over relative Spin$^c$ structures.

\begin{remark}\label{order}
Recall that in Remark \ref{remark2} we may write $\mathcal{W} = \widetilde{\mathcal{W}^b}\circ\widetilde{\mathcal{W}^s}$. By Theorem \ref{tqft} we have that $\mathcal{F}_{\mathcal{W}^s}\circ \mathcal{F}_{\mathcal{W}^b}
=\mathcal{F}_\mathcal{W} =\mathcal{F}_{\widetilde{\mathcal{W}^b}}\circ\mathcal{F}_{\widetilde{\mathcal{W}^s}}$.
\end{remark}

\section{Concordance maps in $\widehat{HFK}$}
\label{sec:concmapshfk}

Below we describe Juh\'{a}sz's construction of concordance maps on $\widehat{HFK}$, originally developed in \cite{cob} and studied further in \cite{JM2}.

\begin{definition}
Let $L\subset M$ be a properly embedded submanifold of a smooth manifold. Fix an auxiliary metric on $M$. For each $p\in L$, let $UN_p L$ denote the fiber of the unit normal bundle of $L$ at $p$. Modifying $M$ by replacing each point of $p\in L$ with $UN_p L$ we obtain the \emph{spherical blowup} of $M$ along $L$, denoted BL$_L (M)$. $M$ is diffeomorphic to $M\setminus nbhd(L)$.
For more details consult
Arone and Kankaanrinta~\cite{AK}.
\end{definition}

\begin{definition}
A \emph{decorated knot} is a pair $(K,P)$, where $K\subset S^3$ is a knot, and $P \subset K$ is a pair of points. 
$P$ splits $K$ into two arc components, $R_+ (P)$ and $R_- (P)$; this is part of the data of a decorated knot.

We canonically assign a balanced sutured manifold $S^3(K,P) = (M,\Gamma)$ to every decorated knot $(K,P)$.
Let $M = \Bl_K(S^3)$ and $\Gamma = \bigcup_{p \in P} UN_pK$. Furthermore,
\[
R_\pm(\Gamma) := \bigcup_{x \in R_\pm(P)} UN_xK,
\]
oriented as $\pm \partial M$, and we orient~$\Gamma$ as~$\partial R_+(\Gamma)$.
In other words, $M\approx S^3 (K)$, and $\Gamma$ consists of two oppositely oriented meridional sutures. The blowup perspective will be useful when defining the sutured cobordism associated to a decorated concordance.
\end{definition}

\begin{definition}
We say that the pair $\mathcal{C} = (F,\sigma)$ is a \emph{decorated knot concordance} from $(K_0,P_0)$ to $(K_1,P_1)$ if
\begin{enumerate}
\item $F$ is a concordance (in particular, an annulus) from $K_0$ to $K_1$,
\item $\sigma$ consists of two properly embedded arcs connecting the two components of $\partial F$. One arc goes from $R_+ (P_0)$ to $R_+ (P_1)$ (or $R_-(P_1)$), and the other goes from $R_-(P_0)$ to $R_-(P_1)$ (or $R_+(P_1)$).
\end{enumerate}

In \cite{JM2}, Juh\'{a}sz and Marengon consider annular cobordisms of knots in integer homology $S^3\times I$'s. We restrict to concordances in $S^3\times I$.
\end{definition}

\begin{definition}
To $(F,\sigma)$, a decorated knot concordance from $(K_0,P_0)$ to~$(K_1,P_1)$, we associate a cobordism $\mathcal{W} = \mathcal{W}(F,\sigma)$ as follows.

Arbitrarily split $F$ into $R_+(\sigma)$ and~$R_-(\sigma)$ so that $R_+(\sigma) \cap R_-(\sigma) = \sigma$,
and orient $F$ so that $\partial R_+(\sigma)$ 
crosses $P_0$ from $R_+(P_0)$ to $R_-(P_0)$ and $P_1$ from~$R_-(P_1)$ to $R_+(P_1)$.
$\mathcal{W} = (W,Z,[\xi])$, where $W = \Bl_F(S^3 \times I)$ and $Z = UNF \approx T^2\times I$, oriented
as a submanifold of~$\partial W$, finally $\xi = \xi_{\sigma}$ is the unique $S^1$-invariant contact structure
with dividing set~$\sigma$ on~$F$ and convex boundary~$\partial Z$
with dividing set projecting to~$P_0 \cup P_1$. See \cite{L} for the construction of $\xi_\sigma$.
\end{definition}

In fact $\xi$ can be ``straightened out" so that it is not only $S^1$ invariant, but also $I$ invariant. In this way, we can view $\mathcal{W}$ as a special cobordism of sutured manifolds from $(S^3(K_0),\Gamma_\mu)$ to $(S^3(K_1),\Gamma_\mu)$. See \cite[Section~5.3]{JM2} for details.

\begin{remark}
The sutured cobordism $\mathcal{W}$ above induces a map $SFH(S^3(K_0),\Gamma_\mu)\to SFH(S^3(K_1),\Gamma_\mu)$. Recall that there is a canonical isomorphism $SFH(S^3(K),\Gamma_\mu) \approx \widehat{HFK}(S^3,K)$, so there is an induced map $\widehat{HFK}(S^3,K_0)\to \widehat{HFK}(S^3,K_1)$. This is precisely the map constructed by Juh\'{a}sz in \cite{cob}. 
\end{remark}

\begin{remark}\label{orientation}
To construct maps associated to a decorated concordance in $HFK^{-}$ we are forced to use different orientation conventions (this is a result of the contact geometric nature of the construction of $\underrightarrow{SFH}$).
Let $\mathcal{C} = (F,\sigma)$ be a decorated concordance from $(K_0,P_0)$ to $(K_1,P_1)$. Let $r:S^3\times I \to S^3\times I$ be the diffeomorphism defined by reflecting the second coordinate about $1/2$. $(r(F),r(\sigma))$ is a decorated concordance from $(K_1,P_1)$ to $(K_0,P_0)$. Let $\mathcal{W}'$ be the associated cobordism of sutured manifolds from $(S^3(K_1),\Gamma_\mu)$ to $(S^3(K_0),\Gamma_\mu)$. Turning the cobordism $\mathcal{W}'$ ``upside down" (see Remark \ref{remark3}) we obtain $\mathcal{W}$, a sutured cobordism from $(-S^3(K_0),-\Gamma_\mu)$ to $(-S^3(K_1),-\Gamma_\mu)$. $\mathcal{W}$ induces a map 
\[\mathcal{F}_\mathcal{W} : SFH(-S^3(K_0),-\Gamma_\mu)\to SFH(-S^3(K_1),-\Gamma_\mu).\] Let 
\[F_\mathcal{C}:\widehat{HFK}(-S^3,K_0)\to \widehat{HFK}(-S^3,K_1)\]
denote the map induced by $\mathcal{F}_\mathcal{W}$ and the identification of $\widehat{HFK}(-S^3,K_i)$ with $SFH(-S^3(K_i),-\Gamma_\mu)$.
\end{remark}

\section{The sutured limit homology package}
\label{sec:sfhlim}

In this section, we provide a review of Etnyre, Vela-Vick, and Zarev's \cite{EVZ} limit construction of $HFK^{-}$. We only consider knots in $S^3$ (this is all we need), although everything in this section holds for an arbitrary null-homologous knot in a closed, orientable $Y^3$. 

Let $K\subset S^3$ be a knot. Let $\lambda$ denote the preferred longitude of $K$ and $\mu$ denote the meridian as before. Let $\Gamma_i$ denote a pair of oppositely oriented $(\lambda - i \mu)$ sutures on $\partial S^3 (K)$. We can think of $(S^3(K),\Gamma_i)$ as a sutured submanifold of $(S^3(K),\Gamma_{i+1})$ where $B_i = \overline{(S^3(K),\Gamma_{i+1})\setminus(S^3(K),\Gamma_i)} \approx T^2 \times [0,1]$. Here we are identifying the boundary of $(S^3(K),\Gamma_i)$ with $T^2 \times \{1\}$. Up to fixing the characteristic foliations on the boundary, $B_i$ admits two contact structures $\xi_+$ and $\xi_-$ such that the boundary is convex having dividing curves which agree with the sutures. 

Honda classifies the minimally twisting tight contact structures on $T^2 \times I$ in \cite{Honda}. Honda shows that the contact structures above may be obtained by taking an $I$-invariant contact structure on $T^2 \times [0,1]$ having diving set $\Gamma_i \times \{t\}$ on $T^2\times \{t\}$, and then attaching a positive or negative (see figure, along $\gamma_+$ for positive and $\gamma_-$ for negative), for $\xi_+$ and $\xi_-$ respectively, bypass on the convex surface $T^2\times \{0\}$. Note that this bypass is attached to $-(T^2 \times \{0\})\subset \partial (T^2 \times I)$, i.e. on the negative side of the surface $T^2 \times \{0\}$.

\begin{center}\begin{figure}[h!]
	\centering
	\def\svgwidth{250pt}
	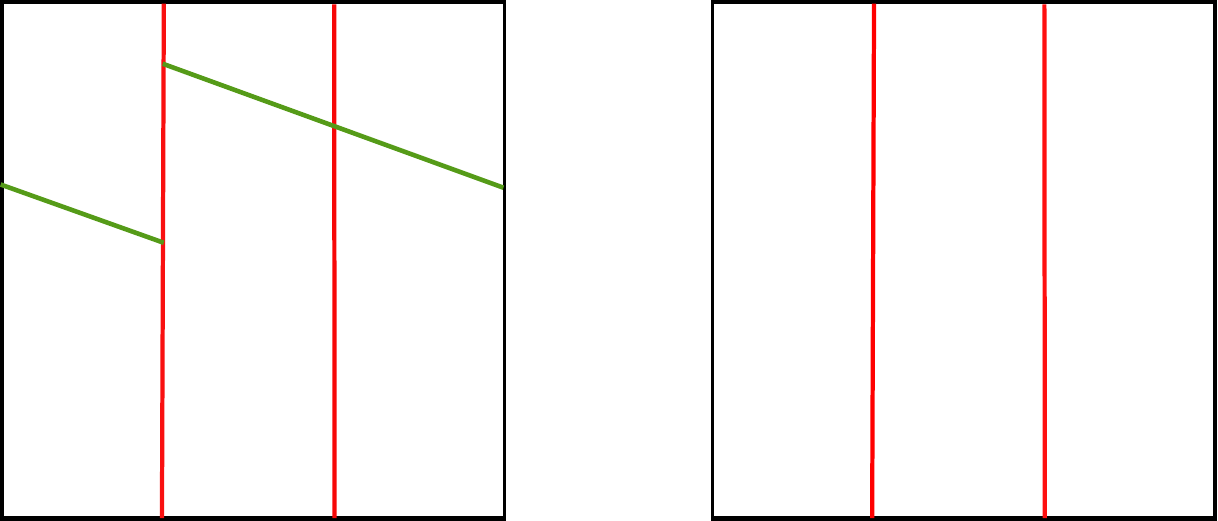
	
\end{figure}\end{center}

Let
\[
\phi_{+,-} : SFH(-S^3 (K),-\Gamma_i) \to SFH(-S^3(K),-\Gamma_{i+1})
\]
denote the contact gluing maps defined in \cite{HKM} induced by $\xi_{+}$ and $\xi_-$ respectively. Taking the direct limit of the above groups with respect to the maps $\phi_-$ gives us the \emph{sutured limit homology} of $K$:
\[
\underrightarrow{SFH} (-S^3,K) := \varinjlim SFH(-S^3(K),-\Gamma_i).
\]

The well-definedness of the contact gluing map implies that the maps $\phi_-$ and $\phi_+$ commute. Thus the latter maps induce a well-defined endomorphism $\Psi$ of $\underrightarrow{SFH}(-S^3,K)$. Endowing $\underrightarrow{SFH}(-S^3,K)$ the structure of an $\mathbb{F}[U]$-module, where the U-action is given by $\Psi$, the following has been shown:

\begin{theorem} \cite[Theorem 1.1]{EVZ}
Let $K\subset S^3$ be a smooth knot. There exists an isomorphism of graded $\mathbb{F}[U]$-modules
\[
I_- :\underrightarrow{SFH}(-S^3,K)\xrightarrow{\approx} HFK^{-} (-S^3,K).
\]
\end{theorem}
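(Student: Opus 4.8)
The plan is to pass to the chain level, model every group $SFH(-S^3(K),-\Gamma_i)$ and every connecting map $\phi_{\pm}$ by concrete Floer data built from a doubly-pointed Heegaard diagram for $K$, and then interchange homology with the direct limit.

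\emph{Step 1: compatible Heegaard diagrams.} Fix a doubly-pointed Heegaard diagram $(\Sigma,\alpha,\beta,w,z)$ for $(S^3,K)$. For each $i$, produce from it a sutured Heegaard diagram $\mathcal{H}_i$ for $(-S^3(K),-\Gamma_i)$ by the standard winding construction: stabilize near the region between $w$ and $z$ and wind the relevant attaching curve $i$ times around a curve isotopic to the meridian, so that the generators of $\mathcal{H}_{i+1}$ are those of $\mathcal{H}_i$ together with the new intersection points produced by one further turn. These diagrams should be arranged so that the bypass moves taking $\Gamma_i$ to $\Gamma_{i+1}$ are visible on them.

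\emph{Step 2: algebraic identification of the groups.} Using the Alexander-grading formula recalled in Example \ref{EX}, I would match the $\mathrm{Spin}^c$-decomposition of $SFH(-S^3(K),-\Gamma_i)$ with an Alexander grading, and identify $SFC(\mathcal{H}_i)$ with an explicit finite-rank truncation $\mathcal{C}_i$ of $CFK^{\infty}(K)$ --- a ``box'' in the plane of $U$-powers and Alexander gradings --- in such a way that the $\mathcal{C}_i$ form a directed system with $\varinjlim_i \mathcal{C}_i \cong CFK^{-}(K)$ as filtered complexes. This amounts to reading off the holomorphic disk counts from the winding region, together with a grading computation.

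\emph{Step 3: the contact gluing maps.} Next I would identify the Honda--Kazez--Mati\'{c} gluing maps $\phi_{-},\phi_{+}$ of \cite{HKM} in these coordinates: $\phi_{-}$ should be the structure map $\mathcal{C}_i \to \mathcal{C}_{i+1}$ of the directed system, while $\phi_{+}$ should induce multiplication by $U$ on the limit. This is the crux, and the step I expect to be the main obstacle, because the gluing map is defined abstractly through contact handle attachments and is hard to evaluate directly. The route I would take is to use Honda's classification of tight contact structures on $T^2\times I$ together with his bypass exact triangles \cite{Honda}: the slopes $\lambda-i\mu$, $\lambda-(i+1)\mu$, $\mu$ bound a triangle in the Farey graph, so there is an exact triangle relating $SFH(-\Gamma_i)$, $SFH(-\Gamma_{i+1})$, and $SFH$ of the complement with meridional sutures --- which is $\widehat{HFK}(-S^3,K)$ up to a grading shift. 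One then matches this triangle with the algebraic mapping-cone description of the $\mathcal{C}_i$ and argues by induction on $i$, with $\widehat{HFK}$ as the base case; along the way one must check that $\phi_{+}$ and $\phi_{-}$ are precisely the maps dictated by Honda's triangle and that they commute (the latter being the well-definedness of the gluing map already recorded in the excerpt). An alternative is to replace $SFH$ by its bordered-sutured refinement, in which gluing is computed by a box tensor product and is therefore explicit.

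\emph{Step 4: passing to the limit.} Since filtered colimits are exact,
\[
\underrightarrow{SFH}(-S^3,K) = \varinjlim_i H_*(\mathcal{C}_i) \cong H_*\big(\varinjlim_i \mathcal{C}_i\big) = H_*(CFK^{-}(K)) = HFK^{-}(-S^3,K),
\]
and under this identification the endomorphism $\Psi$ assembled from $\phi_{+}$ goes over to multiplication by $U$, so the isomorphism is one of $\mathbb{F}[U]$-modules; it is graded because the identifications in Steps 1--3 respect the Alexander and Maslov gradings. One further point to address is that $SFH$ and the contact gluing maps are a priori defined only up to isomorphism; the direct system and its limit are legitimized by the naturality of sutured Floer homology and of the gluing maps, which makes the $\mathcal{C}_i$ and the maps between them canonical up to coherent isomorphism.
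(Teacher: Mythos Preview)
The theorem you are attempting to prove is not proved in this paper; it is quoted from \cite{EVZ} as background in Section~\ref{sec:sfhlim}, and the present paper uses it as a black box. There is therefore no proof here against which to compare your proposal.

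That said, your outline is broadly in the spirit of the argument in \cite{EVZ}. The main divergence is in Step~3: the route actually taken there is the one you list only as an ``alternative'', namely passing to Zarev's bordered-sutured theory so that the Honda--Kazez--Mati\'{c} gluing maps become explicit box tensor products and can be computed in coordinates. Your primary suggestion---identifying $\phi_{\pm}$ via Honda's bypass exact triangle and an induction on $i$---has a genuine gap as written: the bypass triangle is a long exact sequence, not a direct identification of its maps with the structure maps $\mathcal{C}_i\to\mathcal{C}_{i+1}$ of your directed system, and the ``matching'' you invoke between the triangle and the algebraic mapping cone is precisely the computation of the gluing map in coordinates that you are trying to circumvent. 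Without an independent model for $\phi_{-}$ and $\phi_{+}$, the induction has no base to stand on. The bordered-sutured machinery is what makes that identification rigorous in \cite{EVZ}, and is not optional.
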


\begin{remark}\label{alex}(see \cite[Section 3.3 and Propositions 12.2-12.2]{EVZ})
An Alexander grading may also be defined for the sutured limit homology of $K$. The maps $\phi_+,\phi_-$ used in defining $\underrightarrow{SFH}(-S^3,K)$ are both Alexander homogeneous of degree $-1/2$. To get a well defined Alexander grading on $\underrightarrow{SFH}(-S^3,K)$, one must introduce a grading shift for the complexes at each level involved in computing the direct limit. Part of Theorem 4.1 tells us that this Alexander grading agrees with the usual Alexander grading on $HFK^{-}(-S^3,K)$.\\
\end{remark}

\begin{remark}\label{maslov}
Let $K\subset S^3$ be a nullhomologous knot. An absolute $\mathbb{Z}_2$-Maslov grading may be defined for $\underrightarrow{SFH}(-S^3,K)$ which agrees with the usual Maslov grading for $HFK^{-}(-S^3,K)$. The maps $\phi_-$ and $\phi_+$ are Maslov homogeneous of degree $0$, so the usual Maslov grading on $SFH(-S^3(K),-\Gamma_0)$ induces a grading on the direct limit \cite[Section 12.2]{EVZ}. 

\end{remark}

Each sutured manifold $(S^3(K), \Gamma _i)$ can be viewed as a submanifold of $(S^3 (K),\Gamma_\mu)$ so that $\overline{(S^3 (K),\Gamma_\mu)\setminus(S^3(K), \Gamma _i)} \approx T^2\times [0,1]$. There are again two tight contact structures $\xi_+$ and $\xi_-$ on $T^2 \times [0,1]$ with convex boundary having dividing sets $\Gamma_\mu \cup \Gamma_i$. Let 
\[
\phi_{SV}:SFH(-S^3(K),-\Gamma_i)\to SFH(-S^3(K),-\Gamma_\mu)
\]
be the contact gluing map corresponding to $\xi_-$. The classification of tight contact structures on thickened tori, along with well-definedness of the contact gluing map, tells us that the maps $\{\phi_{SV}\}$ induce a well-defined map $\Phi_{SV}$ on the direct limit.

\begin{theorem} \cite[Theorem 1.3]{EVZ}\label{SVC}
Let $K\subset S^3$ be a smooth knot. The following diagram commutes:
\[
\xymatrix{
\underrightarrow{SFH}(-S^3,K) \ar[d]^{\Phi_{SV}} \ar[r]^{I_-} &HFK^{-}(-S^3,K)\ar[d]^{p_*}\\
SFH(-S^3(K),-\Gamma_\mu) \ar[r]^{\approx} &\widehat{HFK}(-S^3,K)}
\]
where $p_*$ is the map induced by setting $U=0$ on the chain level. The bottom isomorphism is the canonical one.
\end{theorem}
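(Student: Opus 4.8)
\medskip\noindent\textbf{Proof strategy.} The plan is to verify the diagram at the chain level, using the model that underlies the isomorphism $I_-$ of Theorem~4.1. Recall that Etnyre, Vela-Vick, and Zarev construct $I_-$ by relating, for each $i$, the chain complex computing $SFH(-S^3(K),-\Gamma_i)$ to a subquotient complex $A_i$ of the knot Floer complex $CFK^-(-S^3,K)$, in such a way that the maps $\phi_-$ become the structure maps of a direct system with $\varinjlim_i A_i = CFK^-(-S^3,K)$ and the bypass maps $\phi_+$ (which commute with the $\phi_-$) become multiplication by $U$; passing to homology and then to the colimit produces $I_-$ and identifies the $U$-action $\Psi$. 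Since $I_-$ is assembled from these chain-level identifications, it suffices to give a compatible chain-level description of the stabilization maps $\phi_{SV}$.

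For each $i$ the layer $\overline{(S^3(K),\Gamma_\mu)\setminus(S^3(K),\Gamma_i)}$ is a single basic slice $T^2\times I$ carrying the contact structure $\xi_-$, and the triangles
\[
\xymatrix{
SFH(-S^3(K),-\Gamma_i)\ar[r]^{\phi_-}\ar[dr]_{\phi_{SV}} & SFH(-S^3(K),-\Gamma_{i+1})\ar[d]^{\phi_{SV}}\\
& SFH(-S^3(K),-\Gamma_\mu)
}
\]
commute --- this is precisely the statement, recorded above, that the maps $\phi_{SV}$ descend to a well-defined map on the direct limit, and it follows from Honda's classification of tight contact structures on thickened tori \cite{Honda} together with well-definedness of the contact gluing maps. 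Hence it is enough to compute the chain map induced by $\phi_{SV}$ for one convenient value of $i$. Working on a sutured Heegaard diagram adapted to the basic-slice gluing (equivalently, via the explicit bypass-attachment map of \cite{HKM}), I expect this to be identified with the composite $A_i\to CFK^-(-S^3,K)\to CFK^-(-S^3,K)/U=\widehat{CFK}(-S^3,K)$ of the structure map into the colimit with reduction modulo $U$; its induced map on homology, under the canonical identification $SFH(-S^3(K),-\Gamma_\mu)\cong\widehat{HFK}(-S^3,K)$ of Example~\ref{EX} and the identification $\widehat{HFK}(-S^3,K)=H_*(CFK^-(-S^3,K)/U)$, is then the restriction of the reduction map $p_*$.

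Granting this, I would finish by taking the direct limit over $i$ of the (tautologically commuting) squares of chain maps
\[
\xymatrix{
A_i\ar[r]\ar[d] & CFK^-(-S^3,K)\ar[d]\\
\widehat{CFK}(-S^3,K)\ar@{=}[r] & CFK^-(-S^3,K)/U
}
\]
whose top map is the structure map into the colimit, whose left map is the level-$i$ model for $\phi_{SV}$ obtained above, and whose vertical maps are reduction modulo $U$. Since $\varinjlim_i A_i=CFK^-(-S^3,K)$ compatibly with $I_-$, passing to homology turns the top row into $I_-$, the left column into $\Phi_{SV}$, and the right column into $p_*$, so commutativity of the colimit square is exactly the statement of the theorem. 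Throughout one must keep track of the relative $\mathrm{Spin}^c$ decompositions and the half-integral Alexander grading shifts of Remark~\ref{alex} so that all identifications are grading-preserving.

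The main obstacle is the middle step: pinning down the chain-level behaviour of $\phi_{SV}$, i.e.\ showing that attaching the basic-slice layer with contact structure $\xi_-$ really implements ``setting $U=0$''. This is where the genuine content lies, and where one is forced to work with an explicit partial open book and an adapted sutured Heegaard diagram; once this local computation is secured, the reduction to a single $i$ and the colimit argument are essentially formal.
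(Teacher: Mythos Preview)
This theorem is not proved in the paper: it is quoted verbatim as \cite[Theorem~1.3]{EVZ} and used as a black box. There is therefore no proof in the paper to compare your proposal against. Your sketch is a reasonable outline of the strategy actually used in \cite{EVZ}---identifying each $SFH(-S^3(K),-\Gamma_i)$ with a subquotient of $CFK^-$, tracking the bypass maps through these identifications, and computing the Stipsicz--V\'ertesi gluing explicitly on an adapted diagram---and you correctly flag that the local computation of $\phi_{SV}$ as ``reduction mod $U$'' is where the real work lies. But as far as the present paper is concerned, no proof is expected or given; the result is simply imported.
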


\begin{remark}\label{SV}
The contact structure $\xi_-$ used in defining the Stipsicz-V\'{e}rtesi map above is also induced by a bypass attachment. We use this later to show that the concordance maps on $HFK^-$ generalize the concordance maps on $\widehat{HFK}$.
\end{remark}

We may also attach a contact 2-handle to each $(S^3(K),\Gamma_i)$. Let \[\phi_C: SFH(-S^3(K),-\Gamma_i)\to SFH(-(S^3\setminus B^3),\Gamma)\] (where $\Gamma$ is a single suture on the $S^2$ boundary) denote the induced gluing map. The maps $\phi_C$ again induce a well defined map, $\Phi_C$ on the direct limit.

\begin{theorem} \cite[Theorem 1.4]{EVZ}\label{C2H}
Let $K\subset S^3$ be a smooth knot. The following diagram commutes:
\[
\xymatrix{
\underrightarrow{SFH}(-S^3,K) \ar[d]^{\Phi_{C}} \ar[r]^{I_-} &HFK^{-}(-S^3,K)\ar[d]^{\pi_*}\\
SFH(-(S^3\setminus B^3),-\Gamma) \ar[r]^{\approx} &\widehat{HF}(-S^3)}
\]
where $\pi_*$ is the map induced by setting $U=1$ on the chain level. The bottom isomorphism is the canonical one.
\end{theorem}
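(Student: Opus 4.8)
The plan is to reduce the asserted commutativity to a statement about chain maps, by working with a single family of Heegaard-diagrammatic models that computes, simultaneously, every group and arrow in the square. Fix a doubly pointed Heegaard diagram $(\Sigma,\boldsymbol{\alpha},\boldsymbol{\beta},w,z)$ for $K\subset S^3$; for each $i$ I would realize the sutured Floer chain complex $SFC(-S^3(K),-\Gamma_i)$ by the ``winding region'' modification used in \cite{EVZ}, in which the $\beta$-curve through $z$ is wound $i$ times about the meridian and the negative-bypass maps $\phi_-$ carry an explicit chain-level description. In this setup the direct-limit complex $\varinjlim SFC(-S^3(K),-\Gamma_i)$ is chain homotopy equivalent to $CFK^-(-S^3,K)$ with its $\mathbb{F}[U]$-action, and $I_-$ is induced by this equivalence. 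It then suffices to produce a chain-level model for each $\phi_C$ and to show that, after all the identifications, it agrees with the evaluation $U=1$ and is compatible with the $\phi_-$.

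First I would pin down the $2$-handle itself. It is attached to $(S^3(K),\Gamma_i)$ along a Legendrian meridian $\mu$, which meets $\Gamma_i$ in exactly two points because the geometric intersection of $\mu$ with a curve of slope $\lambda-i\mu$ is one; topologically the attachment is the Dehn filling of $S^3(K)$ along $\mu$, whose result is $S^3\setminus B^3$ carrying a single suture on its $S^2$ boundary, and $SFH$ of the latter is canonically $\widehat{HF}(S^3)$. Then I would give a chain-level model for $\phi_C$: using the Honda--Kazez--Mati\'c description of the gluing map --- or, preferably, a Heegaard diagram adapted to the handle attachment in which the attachment is visibly a stabilization that fills in the winding region and turns $z$ into the distinguished basepoint of an honest diagram for $S^3$ --- I would show that $\phi_C$ is represented on generators by the map identifying all powers of $U$. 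Under the identification of the limit complex with $CFK^-(-S^3,K)$ (whose differential counts disks with $n_z=0$, weighted by $U^{n_w}$), this is precisely the passage to $\widehat{CF}(S^3)$ obtained by setting $U=1$, i.e.\ by reducing modulo $U-1$.

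The remaining points are bookkeeping, but each requires care. First, one checks $\phi_C\circ\phi_-=\phi_C$ after the identifications, so that $\{\phi_C\}$ descends to a well-defined $\Phi_C$; this follows, as for $\Psi$ and $\Phi_{SV}$ in the excerpt, from well-definedness of the contact gluing map together with the fact that attaching a negative bypass and then the $2$-handle yields a contact structure isotopic to the one obtained by attaching the $2$-handle directly (the bypass becomes trivial after the filling). Second, one verifies that the chain-level maps above commute with $I_-$ at each finite level, hence in the limit --- this is exactly where having $\phi_-$, $\phi_C$, and the equivalence with $CFK^-$ all visible on one diagram is needed. Third, one matches $SFH(-(S^3\setminus B^3),-\Gamma)\cong\widehat{HF}(-S^3)$ with the canonical isomorphism and checks that the square respects the absolute $\mathbb{Z}_2$-Maslov grading, which holds since both $\phi_C$ and reduction modulo $U-1$ are grading preserving.

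I expect the main obstacle to be the chain-level identification of $\phi_C$ in the second paragraph: the Honda--Kazez--Mati\'c gluing map is defined only indirectly, through a partial open book and an auxiliary closed contact manifold, so extracting a clean formula for a contact $2$-handle is not automatic. The route I would actually pursue is to construct a Heegaard diagram simultaneously adapted to the winding-region model for the $\phi_-$ and to the $2$-handle attachment, so that $\phi_C$ becomes a manifest stabilization map; arranging a single diagram that is transparent for $I_-$, for the bypass maps, and for the $2$-handle at once is the delicate part of the argument.
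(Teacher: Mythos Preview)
This theorem is not proved in the present paper: it is quoted verbatim as \cite[Theorem~1.4]{EVZ} and belongs to the review material in Section~\ref{sec:sfhlim}. The paper offers no argument for it whatsoever, so there is no ``paper's own proof'' to compare your proposal against. Any comparison would have to be with the original proof in \cite{EVZ}, which is outside the scope of this paper.

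That said, your sketch is a plausible outline of how such a statement is established, and is in the spirit of \cite{EVZ}: one builds Heegaard-diagrammatic models in which the direct system, the identification $I_-$, and the gluing maps are all visible on a common diagram, and then reads off the commutativity at the chain level. Your own caveat is the right one --- extracting a clean chain-level formula for the Honda--Kazez--Mati\'c gluing map associated to a contact $2$-handle is the nontrivial step, and it is exactly this kind of identification that occupies much of \cite{EVZ}. But none of that work is reproduced here; the present paper simply imports the result.
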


\section{Concordance maps in $HFK^{-}$}
\label{sec:concmapshfk-}

We now turn to the proof of our main result, Theorem \ref{first}.
\begin{proof}
We define a map $\mathcal{G}_\mathcal{W} :\underrightarrow{SFH}(-S^3,K_0)\to\underrightarrow{SFH}(-S^3,K_1)$ induced by $\mathcal{C}$ in Proposition \ref{main}. The map $G_\mathcal{C}$ is induced by $\mathcal{G}_\mathcal{W}$ and the identification of $\underrightarrow{SFH}(-S^3,K_i)$ with $HFK^{-}(-S^3,K_i)$. We show $G_\mathcal{C}$ preserves gradings in Propositions \ref{palex} and \ref{pmaslov}.
\end{proof}

\begin{proposition}\label{main}
A decorated concordance $\mathcal{C} = (F,\sigma)$ from $(K_0,P_0)$ to $(K_1,P_1)$ induces an $\mathbb{F}[U]$-module homomorphism \[\mathcal{G}_\mathcal{W} :\underrightarrow{SFH}(-S^3,K_0)\to\underrightarrow{SFH}(-S^3,K_1).\]
\end{proposition}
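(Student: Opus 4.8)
The plan is to build $\mathcal{G}_\mathcal{W}$ level by level and then show the construction passes to the direct limit and commutes with the $U$-action $\Psi$. Fix the decorated concordance $\mathcal{C} = (F,\sigma)$, and let $\mathcal{W}$ be the associated sutured cobordism (following the orientation conventions of Remark \ref{orientation}, so $\mathcal{W}$ is a special cobordism from $(-S^3(K_0),-\Gamma_\mu)$ to $(-S^3(K_1),-\Gamma_\mu)$). The first step is to realize that the ambient concordance exterior $(S^3 \setminus F) \times$-data gives, for each $i$, a sutured cobordism $\mathcal{W}_i$ from $(-S^3(K_0),-\Gamma_i)$ to $(-S^3(K_1),-\Gamma_i)$: one simply takes the blowup $\Bl_F(S^3\times I)$ together with the contact structure $\xi_\sigma$, but now demands the boundary dividing curves project to curves of slope $\lambda - i\mu$ at each end rather than to $P_0 \cup P_1$. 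Because $F$ is a product annulus, $S^3 \times I$ blown up along $F$ really is a product on the level of the complement, so each $\mathcal{W}_i$ can again be straightened to a special cobordism of sutured manifolds, and hence by Section \ref{sec:cobmapssfh} induces a map $\mathcal{F}_{\mathcal{W}_i} : SFH(-S^3(K_0),-\Gamma_i) \to SFH(-S^3(K_1),-\Gamma_i)$.

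The second, and key, step is compatibility of the $\mathcal{F}_{\mathcal{W}_i}$ with the structure maps $\phi_-$ defining the direct limit. Concretely, I must show that for each $i$ the square
\[
\xymatrix{
SFH(-S^3(K_0),-\Gamma_i) \ar[d]^{\phi_-} \ar[r]^{\mathcal{F}_{\mathcal{W}_i}} & SFH(-S^3(K_1),-\Gamma_i) \ar[d]^{\phi_-}\\
SFH(-S^3(K_0),-\Gamma_{i+1}) \ar[r]^{\mathcal{F}_{\mathcal{W}_{i+1}}} & SFH(-S^3(K_1),-\Gamma_{i+1})
}
\]
commutes. The point is that both composites $\phi_- \circ \mathcal{F}_{\mathcal{W}_i}$ and $\mathcal{F}_{\mathcal{W}_{i+1}} \circ \phi_-$ are the map induced by one and the same sutured cobordism: namely the cobordism obtained by gluing the bypass-attachment piece $B_i \approx T^2\times[0,1]$ (with the contact structure $\xi_-$) onto the appropriate end of $\mathcal{W}_i$. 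Since the bypass piece is attached in the interior-collar region and $F$ is a product, these two gluings produce equivalent sutured cobordisms from $(-S^3(K_0),-\Gamma_i)$ to $(-S^3(K_1),-\Gamma_{i+1})$; then Theorem \ref{tqft} (functoriality of $SFH$ under composition of cobordisms) forces the square to commute. Concretely this amounts to checking that "attach the concordance trace, then attach a $\xi_-$-bypass" is diffeomorphic rel boundary to "attach a $\xi_-$-bypass, then attach the concordance trace," which is true because the supports are disjoint; the contact-geometric input is just that the contact structure on the composite is the expected $I$-invariant-plus-bypass one, using Honda's classification \cite{Honda} and the well-definedness of the gluing map from \cite{HKM}. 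Given this, the $\mathcal{F}_{\mathcal{W}_i}$ assemble into a map $\mathcal{G}_\mathcal{W}$ on the colimits by the universal property.

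The final step is to check $\mathcal{G}_\mathcal{W}$ is $\mathbb{F}[U]$-linear, i.e.\ that it commutes with $\Psi$. Recall $\Psi$ is induced by the positive-bypass maps $\phi_+$. The same argument as above, with $\xi_-$ replaced by $\xi_+$, shows $\phi_+ \circ \mathcal{F}_{\mathcal{W}_i} = \mathcal{F}_{\mathcal{W}_{i+1}} \circ \phi_+$, and passing to the limit gives $\mathcal{G}_\mathcal{W} \circ \Psi = \Psi \circ \mathcal{G}_\mathcal{W}$. Hence $\mathcal{G}_\mathcal{W}$ is an $\mathbb{F}[U]$-module homomorphism, as claimed. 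I expect the main obstacle to be the second step: carefully setting up the sutured cobordism $\mathcal{W}_i$ with the shifted suture slopes and verifying that the relevant composite cobordisms are genuinely equivalent in the sense of the $SFH$ TQFT (in particular that the contact structures match up after straightening), rather than merely "obviously" so. The $U$-equivariance is then a formal corollary of the same circle of ideas. One should also note in passing that independence of $\mathcal{G}_\mathcal{W}$ from the auxiliary choices (the splitting $F = R_+(\sigma)\cup R_-(\sigma)$, the straightening isotopies) follows from the corresponding invariance statements for Juh\'asz's construction in \cite{cob}, since everything is built from his cobordism maps.
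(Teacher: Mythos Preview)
Your proposal is correct and follows essentially the same route as the paper: build level maps $g_i$ via special cobordisms $\mathcal{W}_i$ with sutures $\Gamma_i$, check the squares with $\phi_-$ and $\phi_+$ commute by recognizing both composites as the map of a single sutured cobordism (using that a cobordism factored as special-then-boundary or boundary-then-special induces the same map, i.e.\ Remark~\ref{order}), and pass to the limit. The only cosmetic difference is that the paper constructs $\mathcal{W}_i$ explicitly by attaching a thickened bypass along $\gamma\times I$ to $\mathcal{W}$ (to pass from $\Gamma_\mu$ to $\Gamma_0$) and then iteratively along $\gamma_-\times I$, which makes the relation $\mathcal{W}_{i+1} = \mathcal{W}_i \cup (\text{thickened bypass})$ true by construction rather than something to be checked; your description ``same $W$, new $I$-invariant contact structure with $\Gamma_i$ dividing set'' amounts to the same object but leaves that identification implicit.
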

\begin{proof}
Let $(F,\sigma)$ be a decorated concordance from $(K_0,P_0)$ to $(K_1,P_1)$. Let $\mathcal{W} = (W,Z,[\xi])$ be the cobordism of sutured manifolds considered in Remark \ref{orientation} from $(-S^3(K_0),-\Gamma_\mu)$ to $(-S^3(K_1),-\Gamma_\mu)$. We identify $Z$ with $T^2 \times I$, under this identification $\xi$ is an $I$-invariant contact structure, and each $T^2\times \{t\}$ is convex with dividing set two oppositely oriented meridional curves. Consider the bypass attachment curve $\gamma$ shown below

\begin{center}\begin{figure}[h!]
	\centering
	\def\svgwidth{110pt}
	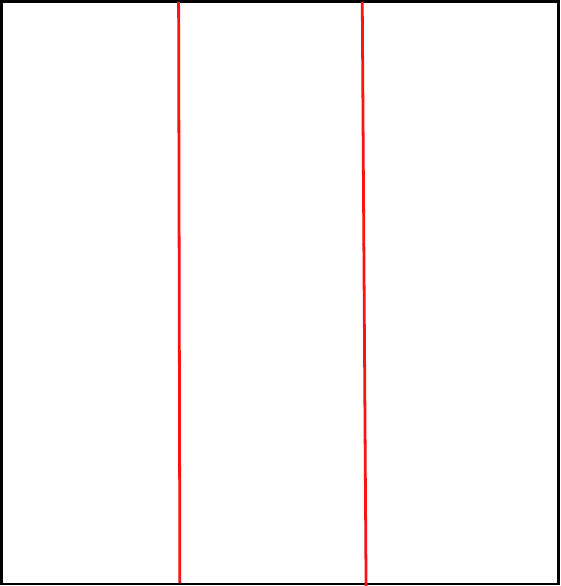
	
\end{figure}\end{center}

Attaching a thickened bypass along $\gamma\times I$ we obtain a new $I$ invariant contact structure $\xi _1$ on $T^2\times I$, where the dividing set on $T^2\times \{t\}$ is now $\Gamma_0$. Let $\mathcal{W}_0$ denote $(W,Z,[\xi_0])$. Note that $\mathcal{W}_0$ is still a special cobordism, now from $(-S^3(K_0),-\Gamma_0)$ to $(-S^3(K_1),-\Gamma_0)$. 

Consider the bypass attachment curve $\gamma_-$ defined in the previous section. If we have a special cobordism $\mathcal{W}_i = (W,Z,[\xi_i])$ from $(-S^3(K_0),-\Gamma_i)$ to $(-S^3(K_1),-\Gamma_i)$ we may attach a thickened bypass along $\gamma_- \times I$ to obtain $\mathcal{W}_{i+1} = (W,Z,[\xi_{i+1}])$, a special cobordism from $(-S^3(K_0),-\Gamma_{i+1})$ to $(-S^3(K_1),-\Gamma_{i+1})$. Each $\mathcal{W}_i$ induces a map on sutured Floer homology; we obtain a sequence of maps \[g _i : SFH(-S^3(K_0),-\Gamma_i) \to SFH(-S^3(K_1),-\Gamma_i).\] 
We claim that the following diagrams commute for each integer $i\ge 0$:
\[
\xymatrix{
SFH(-S^3(K_0),-\Gamma_i) \ar[d]^{\phi _-} \ar[r]^{g_i} &SFH(-S^3(K_1),-\Gamma_i)\ar[d]^{\phi_-}\\
SFH(-S^3(K_0),-\Gamma_{i+1}) \ar[r]^{g_{i+1}} &SFH(-S^3(K_1),-\Gamma_{i+1})}
\]
and
\[
\xymatrix{
SFH(-S^3(K_0),-\Gamma_i) \ar[d]^{\phi _+} \ar[r]^{g_i} &SFH(-S^3(K_1),-\Gamma_i)\ar[d]^{\phi_+}\\
SFH(-S^3(K_0),-\Gamma_{i+1}) \ar[r]^{g_{i+1}} &SFH(-S^3(K_1),-\Gamma_{i+1})}
\]
where the maps $\phi_-$ and $\phi_+$ were defined in the previous section.

Recall that the maps $\phi_-$ and $\phi_+$ are contact gluing maps, hence can be thought of as being induced by boundary cobordisms of sutured manifolds. The maps $g_i$ are induced by special cobordisms of sutured manifolds. Each composition of maps in the diagrams is induced by a cobordism which is obtained by stacking special and boundary cobordisms. In light of Remark \ref{order}, the diagrams commute.

The commutativity of the first diagram shows that the maps $\{g_i\}$ induce a map \[\mathcal{G}_\mathcal{W}  :\underrightarrow{SFH}(-S^3,K_0)\to\underrightarrow{SFH}(-S^3,K_1).\] The commutativity of the second diagram is equivalent to  $\mathcal{G}_\mathcal{W} $ being an $\mathbb{F}[U]$-module homomorphism.
\end{proof}

We now show that the maps $\mathcal{G}_\mathcal{W}$ are well behaved with respect to the maps $\mathcal{F}_\mathcal{W}$ defined by Juh\'{a}sz.

\begin{proposition}\label{behave}
Let $(F,\sigma)$ from $(K_0,P_0)$ to $(K_1,P_1)$ be a decorated concordance. We have the following commutative diagram:
\[
\xymatrix{
\underrightarrow{SFH}(-S^3,K_0) \ar[d]^{\Phi_{SV}} \ar[r]^{\mathcal{G}_\mathcal{W}} &\underrightarrow{SFH}(-S^3,K_1)\ar[d]^{\Phi_{SV}}\\
SFH(-S^3(K_0),-\Gamma_\mu) \ar[r]^{\mathcal{F}_\mathcal{W}} &SFH(-S^3(K_1),-\Gamma_\mu)}
\]
where the maps $\Phi_{SV}$ are those which appear in Theorem \ref{SVC}.
\end{proposition}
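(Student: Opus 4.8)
The plan is to exhibit the square as a diagram of maps induced by sutured cobordisms and then invoke the TQFT functoriality of Theorem~\ref{tqft} (in the form recorded in Remark~\ref{order}) to slide the special and boundary pieces past one another. Recall from Proposition~\ref{main} that $\mathcal{G}_\mathcal{W}$ is assembled from the maps $g_i : SFH(-S^3(K_0),-\Gamma_i)\to SFH(-S^3(K_1),-\Gamma_i)$ induced by the special cobordisms $\mathcal{W}_i = (W,Z,[\xi_i])$, and that by Remark~\ref{SV} the Stipsicz--V\'{e}rtesi map $\Phi_{SV}$ is itself induced on each level by a contact gluing map $\phi_{SV}$ attached along a bypass curve. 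Concretely, $\Phi_{SV}$ is the map on the direct limit determined by the maps $\phi_{SV}:SFH(-S^3(K_i),-\Gamma_i)\to SFH(-S^3(K_i),-\Gamma_\mu)$, so it suffices to prove commutativity at each finite stage: that
\[
\mathcal{F}_\mathcal{W} \circ \phi_{SV} = \phi_{SV}\circ g_i
\]
as maps $SFH(-S^3(K_0),-\Gamma_i)\to SFH(-S^3(K_1),-\Gamma_\mu)$ for every $i\ge 0$, where here $\mathcal{F}_\mathcal{W}$ is (the level-$i$ incarnation of) the Juh\'{a}sz concordance map of Remark~\ref{orientation}. Passing to the limit over $i$ then yields the proposition.

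The key step is to recognize both composites in the finite-stage square as $\mathcal{F}$ applied to one and the same sutured cobordism, up to the reordering allowed by Remark~\ref{order}. On the one hand, $g_i$ is induced by the special cobordism $\mathcal{W}_i$ on $Z\approx T^2\times I$ with $I$-invariant contact structure $\xi_i$ whose dividing set is $\Gamma_i$; post-composing with $\phi_{SV}$ adjoins the boundary cobordism given by the bypass $\xi_-$ from $-\Gamma_i$ to $-\Gamma_\mu$. On the other hand, $\mathcal{F}_\mathcal{W}\circ\phi_{SV}$ first performs that same boundary (bypass) cobordism, passing from level $i$ to level $\mu$, and then runs the concordance cobordism at level $\mu$, which — as recalled after the definition of $\mathcal{W}(F,\sigma)$, following \cite[Section~5.3]{JM2} — is the special cobordism on $Z\approx T^2\times I$ whose $I$-invariant contact structure has dividing set $\Gamma_\mu$. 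So both sides are built from: (a) the handle attachments in the interior coming from $F$ (the same handles in both orders, since the interior of the sutured manifold is unchanged by the boundary modification, exactly as in Remark~\ref{remark2}), and (b) the contact structure on the product region $T^2\times I$, which in one case is ``first $\xi_i$, then the bypass $\xi_-$'' and in the other is ``first the bypass $\xi_-$, then $\xi_\mu = \xi$''. These two $I$-invariant contact structures on the stacked $T^2\times I$ region are equivalent in the sense of Definition~\ref{def:equivalent}: each is a minimally twisting tight structure on a thickened torus with the prescribed convex boundary, and by Honda's classification \cite{Honda} such a structure is determined by its boundary data once one fixes, appropriately, the bypasses used — this is precisely the mechanism by which $\xi_1$ was defined in the proof of Proposition~\ref{main}. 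Hence the two cobordisms are equivalent, and Theorem~\ref{tqft} gives the equality of induced maps. Finally one checks that the limit of the $\phi_{SV}$'s is indeed $\Phi_{SV}$ (this is the content of Theorem~\ref{SVC}) and that the maps $g_i$ are compatible with the limit in the way established in Proposition~\ref{main}, so the finite-stage commutativity assembles to the stated diagram.

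The main obstacle I anticipate is the bookkeeping in step (b): verifying carefully that the two $I$-invariant contact structures on the stacked product region agree up to equivalence, i.e.\ that ``straighten $\xi$ to be $I$-invariant, then add the $\xi_-$-bypass'' produces the same equivalence class as ``add the $\xi_-$-bypass to an $I$-invariant structure with dividing set $\Gamma_i$, then straighten the concordance's contact structure''. One has to track the bypass attachment curves ($\gamma$, $\gamma_-$, and the Stipsicz--V\'{e}rtesi curve of Remark~\ref{SV}) and their positions relative to the convex surfaces, and appeal to the classification of tight contact structures on $T^2\times I$ together with the well-definedness (independence of attaching order) of the contact gluing map from \cite{HKM}; the reflection-of-the-second-coordinate and ``upside-down'' conventions of Remark~\ref{orientation} and Remark~\ref{remark3} also have to be threaded through correctly so that all the orientations match. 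Once that identification of contact structures is in hand, the rest is a formal consequence of the $SFH$ TQFT and the limit construction.
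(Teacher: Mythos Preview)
Your proposal is correct and follows essentially the same route as the paper: prove commutativity at each finite stage by recognizing both legs as special/boundary decompositions of the same sutured cobordism and invoking Remark~\ref{order}, then pass to the direct limit. The paper's proof is simply terser---it records the single observation that thickening the Stipsicz--V\'{e}rtesi bypass and attaching it to $\mathcal{W}_i$ recovers $\mathcal{W}$, and then cites Remark~\ref{order}.

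One comment on your ``main obstacle'': the comparison of contact structures you flag is cleaner than you anticipate and does not require Honda's classification in the form you describe. The thickened bypass is attached slice-by-slice to the $I$-invariant structure $\xi_i$ on $Z\cong T^2\times I$, so the result is again $I$-invariant, now with dividing set $\Gamma_\mu$ on each $T^2\times\{t\}$; this is exactly $[\xi]$, hence the modified cobordism is $\mathcal{W}$ on the nose. Once that is noted, Remark~\ref{remark2} tells you the swapped special piece in the other decomposition has the same handles and $\Gamma_\mu$ sutures, i.e.\ is $\mathcal{W}$, and Remark~\ref{order} finishes. There is no separate ``stacking in two orders'' contact-structure computation to perform.
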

\begin{proof}
For any $i\ge 0$ we may attach a Stipsicz V\'{e}rtesi bypass to go from $(-S^3(K),-\Gamma_i)$ to $(-S^3(K),-\Gamma_\mu)$. Thickening this bypass and attaching it to $\mathcal{W}_i$ recovers $\mathcal{W}$. Using Remark \ref{order} again, we see that the following commutes for each $i\ge 0$:
\[
\xymatrix{
SFH(-S^3(K_0),-\Gamma_i) \ar[d]^{\phi_{SV}} \ar[r]^{g_i} &SFH(-S^3(K_1),-\Gamma_i)\ar[d]^{\phi_{SV}}\\
SFH(-S^3(K_0),-\Gamma_\mu) \ar[r]^{\mathcal{F}_\mathcal{W}} &SFH(-S^3(K_1),-\Gamma_\mu)}
\]
The result is now clear, since the vertical and top horizontal maps in the desired diagram are induced by those above.
\end{proof}

\begin{remark}
In light of Theorem \ref{SVC}, the commutative diagram in the statement above is equivalent to
\[
\xymatrix{
HFK^{-}(-S^3,K_0) \ar[d]^{p_*} \ar[r]^{G_\mathcal{C}} &HFK^{-}(-S^3,K_1)\ar[d]^{p_*}\\
\widehat{HFK}(-S^3,K_0) \ar[r]^{F_\mathcal{C}} &\widehat{HFK}(-S^3,K_1)}
\]
where $F_\mathcal{C}$ is the map from Remark \ref{orientation} and the maps $p_{*}$ are induced by settings $U=0$ on the chain level.
\end{remark}

\begin{proposition}
Let $(F,\sigma)$ from $(K_0,P_0)$ to $(K_1,P_1)$ be a decorated concordance. We have the following commutative diagram:
\[
\xymatrix{
\underrightarrow{SFH}(-S^3,K_0)\ar[dr]_{\Phi_C}\ar[rr]^{\mathcal{G}_\mathcal{W}} &  & \underrightarrow{SFH}(-S^3,K_1)\ar[dl]^{\Phi_C}\\
 & SFH(-(S^3\setminus B^3),-\Gamma) & 
}
\]
where $\Gamma$ is a single suture and the map $\Phi_C$ is the one in Theorem \ref{C2H}.

\end{proposition}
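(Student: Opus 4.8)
The plan is to mimic the proof of Proposition \ref{behave}, replacing the Stipsicz--V\'{e}rtesi bypass by the contact $2$-handle attachment that defines the maps $\phi_C$. Recall from Section \ref{sec:sfhlim} that, for each $i \geq 0$, attaching a contact $2$-handle to $(S^3(K),\Gamma_i)$ yields the gluing map $\phi_C : SFH(-S^3(K),-\Gamma_i) \to SFH(-(S^3\setminus B^3),-\Gamma)$, and that these maps are compatible with the tower maps $\phi_-$ used to form the direct limit, so they induce the map $\Phi_C$ of Theorem \ref{C2H}. The key geometric observation is that attaching this contact $2$-handle to the special cobordism $\mathcal{W}_i = (W,Z,[\xi_i])$ from $(-S^3(K_0),-\Gamma_i)$ to $(-S^3(K_1),-\Gamma_i)$ produces a single sutured cobordism $\mathcal{W}_C$ from $(-(S^3\setminus B^3),-\Gamma)$ to itself, and this cobordism does not depend on $i$ (the handle is attached in a collar of the boundary, away from where the successive $\gamma_-$ bypasses are glued).

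\emph{Key steps, in order.} First I would fix $i \geq 0$ and observe that stacking the contact $2$-handle cobordism on top of $\mathcal{W}_i$ can be decomposed into special and boundary pieces in two ways: either as ``$\mathcal{W}_i$ then $2$-handle'' or as ``$2$-handle then $\mathcal{W}_C$'', where $\mathcal{W}_C$ is the cobordism of $(S^2$-boundary$)$ sutured manifolds obtained by pushing the concordance cobordism through the handle attachment. Both $\phi_C$ (a contact gluing map, hence induced by a boundary cobordism) and $g_i$, $\mathcal{F}_{\mathcal{W}_C}$ (induced by special cobordisms, or compositions of special and boundary pieces) fit into this picture, so by Remark \ref{order} (functoriality of $SFH$, Theorem \ref{tqft}) the square
\[
\xymatrix{
SFH(-S^3(K_0),-\Gamma_i) \ar[d]^{\phi_{C}} \ar[r]^{g_i} &SFH(-S^3(K_1),-\Gamma_i)\ar[d]^{\phi_{C}}\\
SFH(-(S^3\setminus B^3),-\Gamma) \ar[r]^{\mathcal{F}_{\mathcal{W}_C}} &SFH(-(S^3\setminus B^3),-\Gamma)}
\]
commutes for each $i$. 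Second, I would note that since $SFH(-(S^3\setminus B^3),-\Gamma) \cong \widehat{HF}(-S^3) \cong \mathbb{F}$, the map $\mathcal{F}_{\mathcal{W}_C}$ is either zero or the identity; since a concordance cobordism (up to handle slides, a product away from the surgery region) induces an isomorphism --- indeed one can check $\mathcal{W}_C$ is, after the standard straightening, the trivial cobordism of $(S^3\setminus B^3, \Gamma)$ to itself composed with an invertible piece --- it is the identity. Thus the bottom map is the identity, and the square says $\phi_C \circ g_i = \phi_C$. Third, I would pass to the direct limit: the $g_i$ induce $\mathcal{G}_\mathcal{W}$ and the $\phi_C$ induce $\Phi_C$ (both by the construction in Section \ref{sec:sfhlim}, compatibly with $\phi_-$), so the commuting squares assemble to give $\Phi_C \circ \mathcal{G}_\mathcal{W} = \Phi_C$, which is exactly the triangle in the statement.

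\emph{Main obstacle.} The delicate point is the identification of the cobordism obtained by stacking the contact $2$-handle on $\mathcal{W}_i$ with the cobordism obtained by first attaching the $2$-handle and then running the (pushed-forward) concordance cobordism $\mathcal{W}_C$ --- i.e., checking that the contact $2$-handle attachment commutes past the special cobordism $\mathcal{W}_i$ and that the resulting $\mathcal{W}_C$ is independent of $i$ and induces the identity on $\widehat{HF}(-S^3)$. This requires unwinding the definitions of the contact $2$-handle attaching region (which lies near $\partial S^3(K)$) and of the bypass attachments defining $\mathcal{W}_i$ (whose effect is localized in the $Z = T^2 \times I$ part of the cobordism), and arguing that these regions are disjoint, so that the two handle-attachment operations commute; Remark \ref{order} then does the bookkeeping. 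Once that geometric picture is in hand, the rest is a formal diagram chase using Theorem \ref{tqft} and the direct-limit construction, exactly as in the proof of Proposition \ref{behave}. One should also remark, as the authors do after Proposition \ref{behave}, that in view of Theorem \ref{C2H} this triangle is equivalent to the statement that $\pi_* \circ G_\mathcal{C} = \pi_*$, i.e.\ that the concordance map on $HFK^-$ recovers the identity on $\widehat{HF}(-S^3)$ after setting $U = 1$.
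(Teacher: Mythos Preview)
Your proposal follows essentially the same route as the paper: attach a thickened contact $2$-handle to each special cobordism $\mathcal{W}_i$, invoke Remark \ref{order} to obtain commuting squares
\[
\xymatrix{
SFH(-S^3(K_0),-\Gamma_i) \ar[d]^{\phi_{C}} \ar[r]^{g_i} & SFH(-S^3(K_1),-\Gamma_i)\ar[d]^{\phi_{C}}\\
SFH(-(S^3\setminus B^3),-\Gamma) \ar[r]^{\mathcal{F}_{\mathcal{W}'}} & SFH(-(S^3\setminus B^3),-\Gamma)
}
\]
and then pass to the direct limit once the bottom map is identified as the identity.

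The one place your write-up is weaker than the paper is the argument that $\mathcal{F}_{\mathcal{W}'}$ (your $\mathcal{F}_{\mathcal{W}_C}$) is the identity. You reduce to ``either zero or the identity'' by the $1$-dimensionality of $\widehat{HF}(-S^3)$ and then gesture at isomorphism via ``invertible pieces'' and handle slides; this is vague and not the real reason. The paper's argument is cleaner and purely topological: after attaching the thickened $2$-handle, the $4$-manifold is the complement in $S^3\times I$ of a neighborhood of a single properly embedded arc (the cocore of the $2$-handle, crossed with $I$), and any such arc is isotopic rel boundary to the trivial arc $\{pt\}\times I$. Hence $\mathcal{W}'$ is diffeomorphic to the trivial cobordism and induces the identity by Theorem \ref{tqft}. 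This also immediately shows $\mathcal{W}'$ is independent of $i$, so your ``main obstacle'' about disjointness of attaching regions evaporates: there is nothing to check beyond the arc-unknotting observation.
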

\begin{proof}
Recall that for $i\ge 0$ we may attach a contact 2-handle to $(-S^3(K),-\Gamma_i)$ to get $(-(S^3\setminus B^3),-\Gamma)$. Attaching a thickened contact 2-handle to $\mathcal{W}_i$ we obtain $\mathcal{W}'$, a sutured cobordism from $(-(S^3\setminus B^3),-\Gamma)$ to a diffeomorphic copy of itself. Remark \ref{order} shows that the following diagram commutes
\[
\xymatrix{
SFH(-S^3(K_0),-\Gamma_i) \ar[d]^{\phi_{SV}} \ar[r]^{g_i} &SFH(-S^3(K_1),-\Gamma_i)\ar[d]^{\phi_{SV}}\\
SFH(-(S^3\setminus B^3),-\Gamma) \ar[r]^{\mathcal{F}_\mathcal{W'}} &SFH(-(S^3\setminus B^3),-\Gamma)}
\]
Since any embedded arc with endpoints on both ends in $S^3\times I$ is isotopic to the trivial arc, one easily sees that the cobordism $\mathcal{W}'$ is diffeomorphic to the trivial cobordism. Thus by \cite{cob} the map $\mathcal{F}_\mathcal{W'}$ is the identity. The result follows.
\end{proof}

\begin{remark}\label{Z}
The diagram of the previous theorem is equivalent to
\[
\xymatrix{
HFK^{-}(-S^3,K_0)\ar[dr]_{\pi_*}\ar[rr]^{G_\mathcal{C}} &  &HFK^{-}(-S^3,K_1)\ar[dl]^{\pi_*}\\
 & \widehat{HF}(-S^3) & 
}
\]
where $\pi_*$ is the map induced by setting $U=1$ at the chain level. (see Theorem \ref{C2H})

\end{remark}

Functoriality is a trivial consequence of \cite[Theorem~11.12]{cob} stated as Theorem \ref{tqft} in this paper.
\begin{proposition}
Let $(F_0,\sigma_0)$ from $(K_0,P_0)$ to $(K_1,P_1)$ and $(F_1,\sigma_1)$ from $(K_1,P_1)$ to $(K_2,P_2)$ be decorated concordances, denoted $\mathcal{C}_0$ and $\mathcal{C}_1$ respectively such that $\sigma_0 |_{K_1} = \sigma_1 |_{K_1}$. Let $\mathcal{C}$ denote the decorated concordance obtained by stacking $\mathcal{C}_0$ and $\mathcal{C}_1$. Then the induced maps satisfy $G_\mathcal{C} = G_{\mathcal{C}_1}\circ G_{\mathcal{C}_0}$.

\end{proposition}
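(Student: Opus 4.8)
The plan is to reduce everything to the functoriality of the $SFH$ TQFT (Theorem \ref{tqft}), exactly as the author announces in the sentence immediately preceding the proposition. First I would unwind the definitions: the map $G_{\mathcal{C}}$ is, by construction, induced by $\mathcal{G}_{\mathcal{W}}$ under the identification $\underrightarrow{SFH}(-S^3,K_i)\cong HFK^{-}(-S^3,K_i)$, and $\mathcal{G}_{\mathcal{W}}$ is in turn induced at each finite level $i$ by the map $g_i$ coming from the special cobordism $\mathcal{W}_i=(W,Z,[\xi_i])$ of Proposition \ref{main}. So it suffices to establish the corresponding statement one level at a time, compatibly with the bypass maps $\phi_-$: writing $g_i^{0}$, $g_i^{1}$, $g_i$ for the level-$i$ maps associated to $\mathcal{C}_0$, $\mathcal{C}_1$, $\mathcal{C}$ respectively, I want $g_i = g_i^{1}\circ g_i^{0}$ for all $i\ge 0$, after which passing to the direct limit (and invoking the $\mathbb{F}[U]$-linearity already proved) gives $G_{\mathcal{C}}=G_{\mathcal{C}_1}\circ G_{\mathcal{C}_0}$.

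The key geometric input is that stacking decorated concordances corresponds, at the level of associated sutured cobordisms, to stacking the cobordisms $\mathcal{W}$. Concretely: given $\mathcal{C}_0$ from $(K_0,P_0)$ to $(K_1,P_1)$ and $\mathcal{C}_1$ from $(K_1,P_1)$ to $(K_2,P_2)$ with $\sigma_0|_{K_1}=\sigma_1|_{K_1}$, the stacked concordance $\mathcal{C}$ has associated $4$-manifold $W=\Bl_{F}(S^3\times I)$ that is diffeomorphic to $W_0\cup_{M_1} W_1$, with the side cobordism $Z=UNF$ decomposing as $Z_0\cup_{T^2} Z_1$ and the contact structure $\xi_\sigma$ restricting to $\xi_{\sigma_0}$ and $\xi_{\sigma_1}$ on the two pieces (this is where the hypothesis $\sigma_0|_{K_1}=\sigma_1|_{K_1}$ is used, to guarantee the decorations glue to an honest decorated concordance and the contact structures match along $T^2$). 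The same gluing picture persists after the level-$i$ bypass modifications of Proposition \ref{main}, since those bypasses are attached near the boundary tori $-(S^3(K_i),-\Gamma_i)$ and do not interact with the interior stacking region; thus $\mathcal{W}_i$ is equivalent to the composition $\mathcal{W}_i^{1}\circ\mathcal{W}_i^{0}$ of sutured cobordisms. Once this is in hand, Theorem \ref{tqft} gives $\mathcal{F}_{\mathcal{W}_i}=\mathcal{F}_{\mathcal{W}_i^{1}}\circ\mathcal{F}_{\mathcal{W}_i^{0}}$, i.e. $g_i=g_i^{1}\circ g_i^{0}$.

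The remaining step is bookkeeping: verify that the identification of $\mathcal{W}_i$ with the stacked cobordism is compatible with the maps $\phi_-$ used to form the direct limits on each of the three knots, so that the equalities $g_i=g_i^{1}\circ g_i^{0}$ assemble into a commutative diagram of directed systems and hence descend to $\mathcal{G}_{\mathcal{W}}=\mathcal{G}_{\mathcal{W}^{1}}\circ\mathcal{G}_{\mathcal{W}^{0}}$; this is immediate from the commuting squares already proved in Proposition \ref{main}, since those squares were exactly what let $\{g_i\}$ descend to $\mathcal{G}_{\mathcal{W}}$. Applying $I_-$ then yields the claim for $G_{\mathcal{C}}$.

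I expect the only real obstacle to be the geometric identification in the second paragraph — namely checking carefully that the sutured cobordism associated to a stacked decorated concordance is equivalent (as a cobordism of sutured manifolds, including the contact structure on $Z$ up to the equivalence of Definition \ref{def:equivalent}) to the composition of the two associated cobordisms, and that this survives the bypass attachments defining $\mathcal{W}_i$. Everything after that is a formal consequence of Theorem \ref{tqft} together with the directed-system argument already set up in Proposition \ref{main}, which is presumably why the author calls it ``a trivial consequence'' of \cite[Theorem~11.12]{cob}.
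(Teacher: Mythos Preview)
Your proposal is correct and follows exactly the approach the paper intends: the paper gives no proof at all beyond the sentence ``Functoriality is a trivial consequence of \cite[Theorem~11.12]{cob},'' and your write-up simply unpacks what that sentence means --- reduce to the level-$i$ maps $g_i$, use that stacking concordances gives a composition of sutured cobordisms, apply Theorem~\ref{tqft}, and pass to the direct limit. You have supplied considerably more detail than the author, but the underlying argument is the same.
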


\section{Gradings}
\label{sec:gradings}

Juh\'{a}sz and Marengon \cite{JM2} show that a map $\mathcal{F}_\mathcal{W}:SFH(-S^3(K_0),-\Gamma_\mu) \to SFH(-S^3(K_1),-\Gamma_\mu)$ associated to a decorated concordance $\mathcal{C}$ splits over the relative $Spin^{\mathbb{C}}$ structures on the associated cobordism of sutured manifolds $\mathcal{W}$, denoted $Spin^\mathbb{C}(\mathcal{W})$. They use this to prove that the maps preserve the Alexander grading.

\begin{definition} \label{defn:spinc}
Let $\mathcal{W} = (W,Z,[\xi])$ be a cobordism of sutured manifolds from $(M_0,\Gamma_0)$ to $(M_1,\Gamma_1)$.
An almost complex structure $J$ defined on a subset of $W$ which contains $\partial Z$
is said to be \emph{admissible},
if the field of complex tangencies in $TZ|_{\partial Z}$ is admissible in~$(Z,\Gamma_0 \cup \Gamma_1)$, and
if the field of complex tangencies in $TM_i|_{\partial M_i}$ is admissible in $(M_i,\Gamma_i)$ for each $i$.

\emph{Relative $Spin^\mathbb{C}$ structures} on $\mathcal{W}$ are homology classes of pairs $(J,P)$, where
\begin{itemize}
\item $P$ is a finite number of points in the interior of $W$,
\item $J$ is an admissible almost complex structure defined on $W \setminus P$, and
\item $s_\xi = s_{\xi_J}$, where $\xi_J$ is the field of complex tangencies along $Z$.
\end{itemize}
 If there exists a compact $1$-manifold $C \subset W \setminus \partial Z$ such that
$P$, $P' \subset C$, $(J,P)$ and $(J',P')$ are said to be \emph{homologous};
note that in this case $J|_{W \setminus C}$ and $J'|_{W \setminus C}$ are isotopic through admissible almost complex structures.
\end{definition}

\begin{proposition} (see \cite[Proposition 3.10]{JM1})
If $\mathcal{C}$ is a decorated concordance between two knots $(K_0,P_0)$
and $(K_1, P_1)$, then the induced map satisfies
\[
F_\mathcal{C} \left( \widehat{HFK}(-S^3, K_0, -P_0, i) \right) \leq \widehat{HFK}(-S^3, K_1, -P_1, i)
\]
for every $i \in \Z$. I.e. the induced map preserved the Alexander grading.
\end{proposition}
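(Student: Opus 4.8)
The plan is to prove the Alexander-grading statement for $F_{\mathcal{C}}$ exactly as Juh\'asz and Marengon do in \cite{JM1, JM2}: first establish that the cobordism map $\mathcal{F}_{\mathcal{W}}$ splits over relative $Spin^{\mathbb{C}}$ structures on the sutured cobordism $\mathcal{W}$ associated to $\mathcal{C}$, and then show that each such relative $Spin^{\mathbb{C}}$ structure carries Alexander grading $i$ on the incoming side to Alexander grading $i$ on the outgoing side. Recall from Example \ref{EX} that the Alexander grading of a generator $x$ in the sutured Floer complex of $(S^3(K_i),\Gamma_\mu)$ is $\tfrac12\langle c_1(s(x),t_\mu),[S_i,\partial S_i]\rangle$ for a Seifert surface $S_i$; so the content is a comparison of relative Chern classes across the cobordism.

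First I would recall the splitting: by Definition \ref{defn:spinc} the map $\mathcal{F}_{\mathcal{W}}$ decomposes as $\mathcal{F}_{\mathcal{W}} = \sum_{\mathfrak{s}\in Spin^{\mathbb{C}}(\mathcal{W})} \mathcal{F}_{\mathcal{W},\mathfrak{s}}$, where $\mathcal{F}_{\mathcal{W},\mathfrak{s}}$ sends the summand of $SFH(-S^3(K_0),-\Gamma_\mu)$ indexed by $\mathfrak{s}|_{M_0}$ into the summand indexed by $\mathfrak{s}|_{M_1}$ — this is exactly the refinement over relative $Spin^{\mathbb{C}}$ structures mentioned after Theorem \ref{tqft}, and it is established in \cite{JM1} for the boundary-cobordism (contact gluing) and special-cobordism (handle-attachment) pieces separately, then composed. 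So it suffices to show: if $\mathfrak{s}\in Spin^{\mathbb{C}}(\mathcal{W})$ restricts to $\mathfrak{s}_0$ on $M_0$ and $\mathfrak{s}_1$ on $M_1$, then $A(\mathfrak{s}_0) = A(\mathfrak{s}_1)$.

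The key step, and the one I expect to be the main obstacle, is the Chern-class computation. Since $F$ is a concordance (an annulus), the two Seifert surfaces $S_0$ and $S_1$ for $K_0$ and $K_1$ can be glued along $F$ to form a closed surface, or more precisely a properly embedded surface $\widehat{S}$ in the complement $W\setminus N(F)$ with $\partial\widehat{S} = (-\partial S_0)\cup\partial S_1$ sitting on $Z = UNF$; here I would use the blowup picture of $\mathcal{W}$ so that $Z\approx T^2\times I$ and the arcs $\sigma$ control how the boundaries match up. The relative Chern class $c_1(\mathfrak{s})$ of the almost complex structure on $W$ restricts to $c_1(s(x),t_\mu)$ on each end, and the contact structure $\xi_\sigma$ on $Z$ — being $S^1$-invariant with dividing set $\sigma$ on $F$ — has field of complex tangencies whose $c_1$ evaluates trivially against $[\widehat{S}\cap Z]$ because $\widehat{S}$ meets $Z$ in meridional arcs transverse to $\sigma$. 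Pairing $c_1(\mathfrak{s})$ against the relative homology class $[\widehat{S}, \partial\widehat{S}]$ and using additivity of the relative Chern class over the decomposition $W = N(M_0)\cup Z \cup N(M_1)$ (up to the special-cobordism handles, whose attaching regions are interior and contribute nothing against $\widehat{S}$ if $\widehat{S}$ is chosen disjoint from them), I would get $\langle c_1(\mathfrak{s}),[\widehat{S}]\rangle = -\langle c_1(\mathfrak{s}_0),[S_0]\rangle + \langle c_1(\mathfrak{s}_1),[S_1]\rangle + (\text{boundary term on } Z)$, and the boundary term vanishes for the reason just given. Hence $\langle c_1(\mathfrak{s}_0),[S_0]\rangle = \langle c_1(\mathfrak{s}_1),[S_1]\rangle$ — wait, one must be careful that $\widehat{S}$ is actually \emph{closed up} so the left side is zero, which holds precisely because the concordance is an annulus: $[\widehat{S}]$ is a genuine relative class with no interior obstruction once we cap off using $F$ itself as part of $\widehat{S}$. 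Either way, dividing by $2$ gives $A(\mathfrak{s}_0) = A(\mathfrak{s}_1)$, which is the desired equality; combined with the splitting this yields $F_{\mathcal{C}}\bigl(\widehat{HFK}(-S^3,K_0,-P_0,i)\bigr)\le \widehat{HFK}(-S^3,K_1,-P_1,i)$ for every $i$. The subtle bookkeeping — orientations of $S_0$ versus $S_1$, the sign from turning $\mathcal{W}'$ upside down in Remark \ref{orientation}, and verifying the $Z$-boundary term really is zero rather than a nonzero integer shift — is where all the care is needed; everything else is formal from the TQFT package and the $Spin^{\mathbb{C}}$ refinement of \cite{JM1}.
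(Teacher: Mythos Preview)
The paper does not actually prove this proposition: it is simply quoted from \cite{JM1}, and the paper immediately moves on to the $HFK^{-}$ analogue (Proposition~\ref{palex}). That said, the two lemmas the paper proves right after Proposition~\ref{palex} are explicitly ``slight modifications of \cite[Lemma~3.8 and Lemma~3.9]{JM2}'', so one can read off the intended argument for the $\widehat{HFK}$ case from them. Your proposal matches the overall architecture --- split $\mathcal{F}_{\mathcal{W}}$ over $Spin^{\mathbb{C}}(\mathcal{W})$, then compare relative Chern numbers on the two ends --- but your execution of the Chern-class step diverges from theirs and, as written, has a genuine gap.

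In the paper (and in \cite{JM2}) one does \emph{not} glue $S_0$, $F$, and $S_1$ into a single surface $\widehat{S}$ and then try to argue that a pairing with it vanishes up to a ``boundary term on $Z$''. Instead one constructs a single relative Chern class $c_1(TW,J,\tau)\in H^2(W,Z)$, using that the $I$-invariant contact structure on $Z\cong T^2\times I$ admits a nowhere-zero section in the meridional direction, so that $c_1(TW,J,\tau)|_{M_i}=c_1(r_i(\mathfrak{s}),t_\mu)$. Then the entire content is the homological fact that $[S_0,\partial S_0]$ and $[S_1,\partial S_1]$ represent the \emph{same} generator of $H_2(W,Z)\cong\mathbb{Z}$: this is checked by intersecting each with a single meridional annulus $m=S^1\times\{pt\}\times I\subset Z$, which hits each $S_i$ once. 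Equality of the two pairings $\langle c_1(TW,J,\tau),[S_i]\rangle$ is then immediate.

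Your version tries to prove the same equality by an additivity/cut-and-paste argument, and the step you flag yourself --- ``the boundary term on $Z$ vanishes because $\widehat{S}$ meets $Z$ in meridional arcs transverse to $\sigma$'' --- is not justified and not obviously true as stated: you have not explained which relative cohomology group your $c_1(\mathfrak{s})$ lives in, why the putative decomposition of the pairing over $N(M_0)\cup Z\cup N(M_1)$ is valid, or why the $Z$-contribution is zero rather than some fixed integer. The cleaner route, and the one actually taken, avoids all of this by never forming $\widehat{S}$ at all and instead showing $[S_0]=[S_1]$ in $H_2(W,Z)$ directly.
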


In this section we prove that the induced maps on $HFK^{-}$ also preserve the Alexander grading.

\begin{proposition}\label{palex}
If $\mathcal{C}$ is a decorated concordance between two knots $(K_0,P_0)$
and $(K_1, P_1)$, then the induced map defined in the previous section preserves the Alexander grading, that is
\[
G_\mathcal{C} \left( HFK^{-}(-S^3, K_0, i) \right) \leq HFK^{-}(-S^3,K_1, i)
\]
for every $i \in \Z$.
\end{proposition}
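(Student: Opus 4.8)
The plan is to lift the Alexander-grading statement from the sutured limit picture, exactly as the authors have lifted the $\mathbb{F}[U]$-module structure. Recall that $HFK^-(-S^3,K_i)$ is identified with $\underrightarrow{SFH}(-S^3,K_i)$, and the latter is a direct limit of the groups $SFH(-S^3(K_i),-\Gamma_{n})$ under the maps $\phi_-$. By Remark \ref{alex}, the Alexander grading on $\underrightarrow{SFH}(-S^3,K_i)$ is built from the Alexander gradings on the finite levels together with a grading shift compensating for the fact that $\phi_-$ is homogeneous of degree $-1/2$. Since $\mathcal{G}_\mathcal{W}$ is by construction the map on the direct limit induced by the finite-level maps $g_n : SFH(-S^3(K_0),-\Gamma_n)\to SFH(-S^3(K_1),-\Gamma_n)$, it suffices to show that each $g_n$ is Alexander homogeneous of a fixed degree $d$ independent of $n$ (and in fact $d=0$), and that this is compatible with the grading shifts used to define the limit grading.

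The key steps, in order: \emph{(1)} Recall from Example \ref{EX} that the Alexander grading of a generator is $\tfrac12\langle c_1(s(x),t_\mu),[S,\partial S]\rangle$, a relative Chern class paired with a Seifert surface class. Since a concordance is an annulus, a Seifert surface $S_0$ for $K_0$ caps off along $F$ to a Seifert surface $S_1$ for $K_1$, giving a compatible choice of homology classes $[S_i,\partial S_i]$ — this is the geometric input that pins down the degree shift. \emph{(2)} Invoke the refinement of $\mathcal{F}_\mathcal{W}$ over relative $Spin^\mathbb{C}$ structures on the cobordism $\mathcal{W}_n=(W,Z,[\xi_n])$: the authors note in Section \ref{sec:cobmapssfh} that every map in the TQFT (contact gluing maps and handle-trace maps) refines over $Spin^\mathbb{C}$, so $g_n$ splits as $\bigoplus_{\mathfrak{t}} (g_n)_{\mathfrak{t}}$ with each summand sending $SFH(-S^3(K_0),-\Gamma_n,\mathfrak{t}|_{M_0})$ to $SFH(-S^3(K_1),-\Gamma_n,\mathfrak{t}|_{M_1})$. \emph{(3)} Compute the difference of Alexander gradings $A_1(\mathfrak{t}|_{M_1}) - A_0(\mathfrak{t}|_{M_0})$ in terms of $\langle c_1(\mathfrak{t}),[\widehat{S}]\rangle$ where $\widehat{S} = S_0\cup_F (-S_1)$ is the closed-up surface in $W$; because $F$ is an annulus concordance in $S^3\times I$, this surface is homologically trivial (it bounds), so the pairing vanishes and $A_1 = A_0$ on matched $Spin^\mathbb{C}$ structures. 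This gives Alexander-degree $0$ at each finite level. \emph{(4)} Pass to the limit: since each $g_n$ has Alexander-degree $0$ and the grading on the limit is defined by the same shifts on both sides (the shifts depend only on the level $n$, not on the knot), the induced map $\mathcal{G}_\mathcal{W}$, hence $G_\mathcal{C}$, preserves the Alexander grading. One can alternatively deduce Step (3) directly from the Juh\'asz--Marengon result for $\widehat{HFK}$ quoted just above, applied level by level using that the $\Gamma_n$-sutured complement still computes a version where their argument runs verbatim; but the Chern-class computation is cleaner and self-contained.

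\textbf{Main obstacle.} The subtle point is Step (4): checking that the finite-level Alexander-degree-$0$ statements really do assemble to a degree-$0$ statement on the direct limit, given that the limit Alexander grading is \emph{not} simply the finite-level grading but involves a level-dependent shift (because $\phi_-$ is homogeneous of degree $-1/2$, per Remark \ref{alex}). One must verify that the commuting squares from Proposition \ref{main} relating $g_n, g_{n+1}$ via $\phi_-$ respect the \emph{shifted} gradings — equivalently, that the shift is the same on the $K_0$ side and the $K_1$ side. This follows because the shift is intrinsic to the contact gluing $\phi_-$ (attaching a $\gamma_-$ bypass on $T^2\times I$), which is the same local model regardless of the knot; but making this precise requires carefully matching the normalization conventions of \cite{EVZ} with the refinement of $g_n$ over $Spin^\mathbb{C}(\mathcal{W}_n)$. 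A secondary technical point is ensuring the Seifert surfaces $S_0$ and $S_1$ can be chosen so that their homology classes are genuinely compatible with the relative $Spin^\mathbb{C}$ structures appearing in the refinement of $g_n$ — i.e. that the restriction-of-$\mathfrak{t}$ picture in Step (2) is compatible with the capping-off picture in Step (1). Since $S^3$ is a $\mathbb{Z}HS^3$ and we are in a concordance (not a general cobordism), these compatibility issues are mild, which is why restricting from the homology-$S^3\times I$ setting of \cite{JM2} to genuine $S^3\times I$ is convenient.
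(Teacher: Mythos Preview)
Your approach is essentially the paper's, but the paper streamlines in two places worth noting.

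First, rather than proving each $g_n$ is Alexander-homogeneous of degree~$0$ and then worrying about how the level-dependent shifts assemble in the limit (your Step~(4), which you flag as the main obstacle), the paper immediately reduces to the single map $g_0$: since $\phi_-$ is homogeneous of the same degree $-1/2$ on both the $K_0$ and $K_1$ sides and the squares from Proposition~\ref{main} commute, checking $g_0$ alone suffices. This dissolves your main obstacle entirely and makes your careful discussion of matching normalizations unnecessary.

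Second, your Step~(3) forms a closed surface $\widehat{S}=S_0\cup_F(-S_1)$ in $S^3\times I$ and argues it bounds; but $F$ has been excised from $W=\Bl_F(S^3\times I)$, so $\widehat{S}$ does not literally sit in $W$, and the Chern class you call $c_1(\mathfrak{t})$ is really a \emph{relative} class requiring a trivialization over $Z$. The paper handles this more carefully: it builds a trivialization $\tau$ of $TW|_Z$ from the $I$-invariant contact structure $\xi_0$ (perturbed to be tangent to the meridional direction), defines $c_1(TW,J,\tau)\in H^2(W,Z)$, shows each $c_1(r_i(s),t_0)$ is the restriction of this single class to $(-S^3(K_i),-\Gamma_0)$, and then observes via an intersection-pairing argument (pairing against a meridian-times-$I$ cycle in $H_2(W,S^3(K_0)\cup(-S^3(K_1)))$) that $[S_0,\partial S_0]=[S_1,\partial S_1]$ in $H_2(W,Z)\cong\mathbb{Z}$. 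Your closed-surface heuristic is morally the statement $[S_0]-[S_1]=0$ in $H_2(W,Z)$, so the two arguments end in the same place, but the paper's formulation is what makes the relative Chern-class pairing well defined.
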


Recall that the bypass attachment maps are homogeneous with respect to the Alexander grading (see Remark \ref{alex}). By the commutative diagrams in the proof of Proposition \ref{main}, it suffices to show that the map 
\[g_0 : SFH(-S^3(K_0),-\Gamma_0)\to SFH(-S^3(K_1),-\Gamma_0)\]
 preserves the Alexander grading. For any $x\in SFH(-S^3(K_0),-\Gamma_0)$, we need to show that 
\[
\frac{1}{2} \langle c_1(s(x),t_0),[S_0,\partial S_0]\rangle = \frac{1}{2} \langle c_1(s(g_0(x)),t_0),[S_1,\partial S_1]\rangle
\]
where $S_i$ is a Seifert surface for $K_i$ and $t_0$ is a nonzero section from the discussion preceding \ref{EX}.\\

\begin{remark}
We may identify the boundaries of the two knot complements, since they have sutures of identical slopes. This is why we use a single section $t_0$ for both sides of the equation.
\end{remark}

We will prove two lemmas and the proposition will follow.

The following are slight modifications of \cite[Lemma 3.8 and Lemma 3.9]{JM2} suited to our purposes:
\begin{lemma} 
Let $\mathcal{C} = (F,\sigma)$ be a decorated concordance between $(K_0,P_0)$ and $(K_1,P_1)$. Let $\mathcal{W}_0 = (W,Z,[\xi_0])$ be the induced cobordism of sutured manifolds, between $(-S^3(K_0),-\Gamma_0)$ and $(-S^3(K_1),-\Gamma_0)$, constructed in section \ref{sec:concmapshfk-}. Then the map induced on sutured Floer homology by $\mathcal{W}_0$ splits over the relative $Spin^\mathbb{C}$ structures.
\[
g_0 = \bigoplus\limits_{s\in Spin^\mathbb{C}(\mathcal{W}_0)}g_{0,s}
\]
\end{lemma}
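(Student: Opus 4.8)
The plan is to mirror the proof of \cite[Lemma~3.8]{JM2}, adapting it to the cobordism $\mathcal{W}_0$ produced in Section~\ref{sec:concmapshfk-} rather than to the cobordism with meridional sutures. Recall that by construction $\mathcal{W}_0 = (W, Z, [\xi_0])$, where $W = \Bl_F(S^3 \times I)$, $Z = UNF \approx T^2 \times I$, and $\xi_0$ is the $I$-invariant contact structure obtained from $\xi_\sigma$ by attaching a thickened bypass along $\gamma \times I$, so that the dividing set on each $T^2 \times \{t\}$ is $\Gamma_0$ rather than $\Gamma_\mu$. The essential point is that the construction of the induced map $g_0$ factors as in Section~\ref{sec:cobmapssfh}: write $\mathcal{W}_0 = \mathcal{W}_0^s \circ \mathcal{W}_0^b$ with $\mathcal{W}_0^b$ a boundary cobordism (giving the contact gluing map) and $\mathcal{W}_0^s$ a special cobordism (giving a composition of handle maps). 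The refinement over relative $Spin^\mathbb{C}$ structures exists for each of these pieces by \cite{cob}: the contact gluing map refines over $Spin^\mathbb{C}$ structures restricting to the given contact structure, and the special cobordism maps $\mathcal{F}_{\mathcal{W}_i}$ for handle attachments refine over $Spin^\mathbb{C}$ structures on the trace cobordism, exactly as in \cite{OS1}.

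First I would recall the definition of $Spin^\mathbb{C}(\mathcal{W}_0)$ from Definition~\ref{defn:spinc} and observe that, because $Z \approx T^2 \times I$ is a product and $\xi_0$ is $I$-invariant, the constraint $s_\xi = s_{\xi_{J}}$ on $Z$ is exactly of the type handled in \cite[Section~5]{JM2}; there is nothing new here beyond the change of dividing set on the boundary tori. Next I would assemble the refined maps: each of the maps $\mathcal{F}_{\mathcal{W}_0^b}$ (the contact gluing map $\phi_{-\xi_0}$) and $\mathcal{F}_{\mathcal{W}_0^s}$ splits over the respective relative $Spin^\mathbb{C}$ structures, and the composition law for these refinements — that relative $Spin^\mathbb{C}$ structures on $\mathcal{W}_0$ glue from compatible ones on $\mathcal{W}_0^b$ and $\mathcal{W}_0^s$, additively — is precisely the content invoked in \cite{cob} and \cite{JM2}. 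Composing gives the desired splitting $g_0 = \bigoplus_{s \in Spin^\mathbb{C}(\mathcal{W}_0)} g_{0,s}$.

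The one point requiring genuine care — and the main obstacle — is checking that the change from $\Gamma_\mu$ to $\Gamma_0$ (equivalently, the extra bypass attachment along $\gamma$) does not disturb the $Spin^\mathbb{C}$ refinement. Concretely, one must verify that the bypass attachment, being a contact gluing along a product region, admits the refinement over relative $Spin^\mathbb{C}$ structures and is compatible with the gluing/composition formula — but this is again just an instance of the contact gluing map refinement in \cite{cob}, since a thickened bypass attachment is a boundary cobordism of sutured manifolds. Since every piece of $\mathcal{W}_0$ is of a type for which \cite{cob} provides a $Spin^\mathbb{C}$-refined map, and the composition of $Spin^\mathbb{C}$-refined maps is $Spin^\mathbb{C}$-refined, the conclusion follows. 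I would close by noting that this is the exact analogue of \cite[Lemma~3.8]{JM2}, with $(-S^3(K_i), -\Gamma_0)$ in place of $(-S^3(K_i), -\Gamma_\mu)$ and $\xi_0$ in place of $\xi_\sigma$, and that the proof given there carries over verbatim.
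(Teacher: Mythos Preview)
Your proposal is correct and takes essentially the same approach as the paper: both simply observe that the argument of \cite[Lemma~3.8]{JM2} carries over verbatim with $\Gamma_0$ in place of $\Gamma_\mu$. One small redundancy: $\mathcal{W}_0$ is already a \emph{special} cobordism (as noted in Section~\ref{sec:concmapshfk-}), so your decomposition $\mathcal{W}_0 = \mathcal{W}_0^s \circ \mathcal{W}_0^b$ has trivial boundary part and the contact-gluing/bypass discussion is unnecessary---but this is harmless overkill rather than an error.
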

Furthermore, the restriction maps $r_i : Spin^\mathbb{C} (\mathcal{W}_0)\to Spin^\mathbb{C}(-S^3(K_i),-\Gamma_0)$ are isomorphisms for $i\in \{0,1\}$, and $Spin^{\mathbb{C}}(\mathcal{W}_0)$ is an affine space over $H^2 (W,Z) \approx \mathbb{Z}$.
\begin{proof}
The exact argument used in proving \cite[Lemma 3.8]{JM2} applies here.
\end{proof}
 
\begin{lemma}
 Let $\mathcal{C} = (F,\sigma)$ be a decorated concordance from $(K_0,P_0)$ to $(K_1,P_1)$. Let $\mathcal{W}_0$ be as in the above lemma. For $i\in \{0,1\}$, let $S_i$ be a Seifert surface for $K_i$. Let $t_0$ be a section of $v_0^\perp$, a plane field which induces the standard characteristic foliations on $\partial (-S^3(K_i),-\Gamma_0)$ for $i\in \{0,1\}$. Then for any $s\in Spin^\mathbb{C} (\mathcal{W}_0)$ we have
 \[
\langle c_1(r_0(s),t_0),[S_0,\partial S_0]\rangle = \langle c_1(r_1 (s),t_0),[S_1,\partial S_1]\rangle
 \]
 where the maps $r_i : Spin^\mathbb{C} (\mathcal{W}_0)\to Spin^\mathbb{C}(-S^3(K_i),-\Gamma_0)$ for $i\in \{0,1\}$ are the restriction maps.
\end{lemma}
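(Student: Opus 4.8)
The plan is to build a surface $S$ inside the cobordism $W = \Bl_F(S^3 \times I)$ that simultaneously caps off the knots $K_0$ and $K_1$, and then compare the relative Chern class evaluations on $S_0$ and $S_1$ by a capping argument. More precisely, since $F$ is a concordance (an annulus) from $K_0$ to $K_1$, I would take $S_0$ a Seifert surface for $K_0$, push its interior into $-S^3 \times \{0\}$, and glue it along $\partial F$ to the annulus $F$ itself; this produces a properly embedded surface $\Sigma$ in $W$ whose other boundary component is a longitude of $K_1$. After a small isotopy $\Sigma$ caps off to a Seifert surface $S_1$ for $K_1$ on the $-S^3 \times \{1\}$ end. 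The key homological input is that $[\Sigma]$ defines a relative homology class in $H_2(W, Z)$ restricting to $[S_0,\partial S_0]$ and $[S_1,\partial S_1]$ on the two ends, and that $s \in Spin^\mathbb{C}(\mathcal{W}_0)$ has a well-defined first Chern class in $H^2(W,Z)$ (using the trivialization $t_0$ near $\partial Z$, extended over $Z$ using the $I$-invariant contact structure $\xi_0$, which is exactly what the definition of $Spin^\mathbb{C}(\mathcal{W}_0)$ arranges).

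The core of the argument is then the naturality of the relative Chern class under restriction: the diagram relating $H^2(W,Z)$ to $H^2(-S^3(K_i),-\Gamma_0)$ via $r_i^*$ commutes with evaluation against the homology classes $[\Sigma] \mapsto [S_i, \partial S_i]$. So I would verify that $[\Sigma]$ restricted (via the boundary map or the inclusion-induced map in the long exact sequence of the triple) gives $[S_0,\partial S_0]$ at one end and $[S_1,\partial S_1]$ at the other, which is essentially the definition of how $\Sigma$ was constructed. Once that is in place,
\[
\langle c_1(r_0(s),t_0),[S_0,\partial S_0]\rangle = \langle c_1(s,t_0),[\Sigma]\rangle = \langle c_1(r_1(s),t_0),[S_1,\partial S_1]\rangle
\]
follows formally. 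This is the strategy of \cite[Lemma 3.9]{JM2}, adapted to the present orientation conventions and to the sutures $\Gamma_0$ rather than $\Gamma_\mu$; the modifications are cosmetic because everything is stated for an arbitrary slope.

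The step I expect to require the most care is confirming that the section $t_0$ of $v_0^\perp$, which is defined on the boundary of each knot complement, extends over the contact piece $Z \cong T^2 \times I$ in a way compatible with $\xi_0$, so that $c_1(s,t_0) \in H^2(W,Z)$ is genuinely well-defined and its restrictions to the two ends are computed with respect to the very same $t_0$ — this is the content of the remark preceding the proposition about using a single section for both sides, and it is exactly where the $I$-invariance of $\xi_0$ is used. The blowup picture is convenient here: in $\Bl_F(S^3 \times I)$ the surface $F$ has been replaced by its unit normal bundle $Z = UNF$, and the $S^1$-invariant (indeed $I$-invariant) contact structure $\xi_0$ furnishes a canonical homotopy class of plane field, hence a canonical $Spin^\mathbb{C}$ structure on $Z$, matching $s_{\xi_0}$; this pins down the trivialization and makes the gluing of $S_0$, $F$, and $S_1$ into a single class $[\Sigma]$ legitimate at the level of relative $Spin^\mathbb{C}$ data. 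Beyond this bookkeeping the computation is routine and I would relegate it to a citation of \cite{JM2}.
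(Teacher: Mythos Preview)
Your strategy matches the paper's: extend the trivialization $t_0$ across $Z$ using the $I$-invariant contact structure $\xi_0$ (the paper does this by perturbing $\xi_0$ to contain the meridional direction and taking a meridional section of the field of complex tangencies $\xi_J$), thereby promoting the Chern class to a global class $c_1(TW,J,\tau)\in H^2(W,Z)$ whose restriction to each end is $c_1(r_i(s),t_0)$; then finish by showing that $[S_0,\partial S_0]$ and $[S_1,\partial S_1]$ have the same image in $H_2(W,Z)\cong\mathbb{Z}$.

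One correction and one comparison. The annulus $F$ does not lie in $W=\Bl_F(S^3\times I)$ --- it has been replaced by $Z=UNF$ --- so your surface $\Sigma=S_0\cup F$ as written is not a surface in $W$. The fix is to use a section of $UNF\to F$ (a pushoff of $F$ into $Z$) in place of $F$; but since that pushoff lies entirely in $Z$, it contributes nothing to the class in $H_2(W,Z)$, and you still owe an argument that the images of $[S_0]$ and $[S_1]$ agree there. The paper handles this last step differently: rather than building an interpolating surface, it pairs each $[S_i,\partial S_i]$ against the dual cycle $m=S^1\times\{pt\}\times I\in H_2\bigl(W,M_0\cup(-M_1)\bigr)$ (the thickened meridian) and observes that both intersection numbers are $+1$; since $H_2(W,Z)\cong\mathbb{Z}$ this identifies the class. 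Your capping construction and the paper's intersection argument are Poincar\'e--Lefschetz dual to one another, so once the $F\notin W$ issue is patched the difference is cosmetic.
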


\begin{proof}
$\mathcal{W}_0 = (W,Z,[\xi_0])$. Let $d:Z\to S^1 \times S^1 \times I$ be a diffeomorphism which maps $\xi_0$ to an $I$ invariant contact structure to which we refer to by the same name(see the proof of Proposition \ref{main}). The first $S^1$ factor is identified with the fiber direction of the unit normal bundle of $F$, whose total space is $Z$. In this contact structure, $S^1\times S^1\times \{a\}$ is convex for each $a\in I$ and has dividing set $Q_0\times S^1 \times \{a\}$ (two oppositely oriented longitudes), where $Q_0\subset S^1$ consists of two oppositely oriented points. $S^1\times \{\theta\}\times I$ is convex for each $\theta \in S^1$ and has dividing set $Q_0\times \{\theta\}\times I$. Thus we can perturb the plane field $\xi_0$ to be always tangent to the first factor, i.e. the meridional direction. Indeed, we can make it so that $\xi_0$ induces the standard characteristic foliation on each $S^1\times S^1\times \{a\}$ compatible with the $-\Gamma_0$ dividing curves.

Let $J$ be an admissible almost complex structure on $W\setminus P$ which represents $s\in Spin^\mathbb{C}(\mathcal{W}_0)$, where $P\subset int(W)$ is a finite collection of points. Let $\xi_J$ denote the field of almost complex tangencies of $J$ over $Z$(now identified with $S^1\times S^1\times I$). By definition we have that $s_{\xi_0} = s_{\xi_J}$. Let $v$ be a nowhere zero section of $\xi_J$ tangent to the meridional direction. Note that both
\[
v|_{\partial (-S^3(K_0),-\Gamma_0)} \text{ and } v|_{\partial (-S^3(K_1),-\Gamma_0)}
\]
both represent the nonzero section $t_0$. The section $v$ gives rise to a trivialization $\tau$ of $\xi_J$, a complex 1 dimensional subbundle of $TW|_Z$. The complement of $\xi_J$ is trivialized by its intersection with $TZ$, hence we have a trivialization of $TW|_Z$. Since $J$ is defined over the 3-skeleton of $W$, the relative Chern class $c_1 (TW,J,t_0)\in H^2(W,Z)$ is well defined. Let $\xi_J^i$ denote the field of almost complex tangencies over $(-S^3(K_i),-\Gamma_0)$. The intersection of $\xi_J^i$ with $T(-S^3(K_i))$ gives rise to a trivialization of the complement of $\xi_J$, hence
\[
c_1(\xi_J^i,t_0) = c_1 (TW|_{(-S^3(K_i),-\Gamma_0)},J,\tau) = c_1 (TW,J,\tau)|_{(-S^3(K_i),-\Gamma_0)}
\]
Note that $\xi_J^i$ represents $r_i (s)$ by construction.

Let $S_i$ be a Seifert surface for $K_i$. Note that there is a bilinear intersection pairing
\[
H_2 (W,Z) \otimes H_2(W, (S^3(K_0)\cup -S^3(K_1)) \to \mathbb{Z}
\]
and that $H_2 (W,Z) \cong \mathbb{Z}$.
Consider the cycle $m = S^1\times \{pt\} \times I \in C_2(W, (S^3(K_0)\cup -S^3(K_1)))$. Each of the $S_i$ intersect $m$ once positively, thus they both represent the positive generator of $H_2 (W,Z)\cong \mathbb{Z}$. Hence we have:
\[
\langle c_1(r_0(s),t_0),[S_0,\partial S_0]\rangle = \langle c_1(TW,J,\tau),[S_0,\partial S_0]\rangle \]
\[= \langle c_1(TW,J,\tau),[S_1,\partial S_1]\rangle 
= \langle c_1(r_1(s),t_0),[S_1,\partial S_1]\rangle
\]
\end{proof}

Juh\'{a}sz and Marengon \cite[Section 6]{JM2} show that the map
\[F_\mathcal{C} : \widehat{HFK}(-S^{3},-K_0)\to \widehat{HFK}(-S^3,-K_1)\]
induced by a decorated concordance $\mathcal{C}$,  preserves the absolute $\mathbb{Q}$-Maslov grading. We show the following:

\begin{proposition}\label{pmaslov}
If $\mathcal{C}$ is a decorated concordance between two knots $(K_0,P_0)$
and $(K_1, P_1)$, then the induced map 
\[G_\mathcal{C}: HFK^{-}(-S^3,K_0)\to HFK^{-}(-S^3,K_1)\]
defined in the previous section preserves the absolute  $\mathbb{Z}_2$-Maslov grading.
\end{proposition}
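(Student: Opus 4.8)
The plan is to mirror the strategy used for the Alexander grading (Proposition \ref{palex}) but now working with the absolute $\mathbb{Z}_2$-Maslov grading, exploiting that this grading is preserved by all of the auxiliary gluing maps. First I would recall from Remark \ref{maslov} that the maps $\phi_-$ and $\phi_+$ are Maslov homogeneous of degree $0$, so the $\mathbb{Z}_2$-Maslov grading on $\underrightarrow{SFH}(-S^3,K)$ is simply induced from the grading on $SFH(-S^3(K),-\Gamma_0)$. By the commutative diagrams established in the proof of Proposition \ref{main} (the squares relating the $g_i$ via $\phi_-$), together with the compatibility of $\mathcal{G}_\mathcal{W}$ with $\Phi_{SV}$ and the identification $I_-$, it therefore suffices to show that the single map
\[
g_0 : SFH(-S^3(K_0),-\Gamma_0)\to SFH(-S^3(K_1),-\Gamma_0)
\]
preserves the absolute $\mathbb{Z}_2$-Maslov grading.

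Next I would unwind $g_0$ into its building blocks. Recall $\mathcal{W}_0 = (W,Z,[\xi_0])$ is a special cobordism, so by the construction in Section \ref{sec:cobmapssfh} the map $g_0$ factors as $\mathcal{F}_{\mathcal{W}^s}\circ\mathcal{F}_{\mathcal{W}^b}$, where $\mathcal{F}_{\mathcal{W}^b}$ is a contact gluing map and $\mathcal{F}_{\mathcal{W}^s}$ is a composition of trace cobordism maps for index $1$, $2$, $3$ handle attachments. Since $\mathcal{W}_0$ is special, in fact $Z = \partial M_0\times I$ is trivial and the gluing map $\mathcal{F}_{\mathcal{W}^b}$ is the contact gluing map for an $I$-invariant contact structure; such a map is an isomorphism and is well understood. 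For the handle maps, one argues exactly as in the closed $\widehat{HF}$ theory: a $1$-handle map raises the relative Maslov grading by a fixed amount, a $3$-handle map lowers it, a $2$-handle map adds a $\mathrm{Spin}^c$-dependent shift, and since for a concordance the number of $1$-, $2$-, $3$-handles and the relevant intersection data are constrained (the cobordism $W = \Bl_F(S^3\times I)$ is, topologically, a product), these shifts cancel modulo $2$. The cleanest route is to cite \cite[Section 6]{JM2}: there Juh\'{a}sz and Marengon prove that $\mathcal{F}_\mathcal{W}$ for the $\Gamma_\mu$ cobordism preserves the absolute $\mathbb{Q}$-Maslov grading; the identical bookkeeping, now performed for the $\Gamma_0$ cobordism $\mathcal{W}_0$ (which differs from $\mathcal{W}$ only by composition with further Maslov-degree-$0$ bypass maps, hence has the same handle decomposition up to the contact pieces), shows $g_0$ is Maslov homogeneous of degree $0$, and reducing mod $2$ gives the claim.

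Concretely the key steps, in order, are: (1) reduce, via the commutative squares of Proposition \ref{main} and Remark \ref{maslov}, from $\mathcal{G}_\mathcal{W}$ (equivalently $G_\mathcal{C}$) to the single map $g_0$; (2) factor $g_0$ through the contact gluing map of the $I$-invariant contact structure on $Z$ and the trace cobordism maps of the handle decomposition of $\widetilde{\mathcal{W}^s}$, and observe that $W=\Bl_F(S^3\times I)$ is a product cobordism so the handle count is balanced; (3) invoke the grading computation of \cite[Section 6]{JM2}, noting that their argument for the $\Gamma_\mu$ cobordism applies verbatim to $\mathcal{W}_0$ because the two cobordisms have the same underlying $4$-manifold and handle data, differing only by contact gluing maps which are Maslov-degree-$0$; and (4) conclude that $g_0$, hence $G_\mathcal{C}$, preserves the absolute $\mathbb{Z}_2$-Maslov grading. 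The main obstacle is step (2)--(3): one must check carefully that the contact gluing map associated to the $I$-invariant contact structure is Maslov-degree-$0$ (this follows from its being an isomorphism in the sense of \cite{HKM}, or from the stabilization/destabilization description of contact gluing), and that passing from the $\Gamma_\mu$ to the $\Gamma_0$ decorations does not alter the net Maslov shift — which is exactly the content guaranteed by Remark \ref{maslov} since the intermediate bypass maps $\phi_-$ are Maslov homogeneous of degree $0$.
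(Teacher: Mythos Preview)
Your reduction to the single map $g_0$ via Remark \ref{maslov} and the commutative squares of Proposition \ref{main} is exactly what the paper does, and your appeal to \cite[Section 6]{JM2} for the handle-attachment maps of the special cobordism $\mathcal{W}_0$ also matches the paper --- but only for the \emph{relative} $\mathbb{Z}_2$-grading. The paper explicitly uses JM2's Lemmas 6.4--6.6 to conclude that $g_0$ preserves the relative grading, and stops there.

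Where you diverge is in how the \emph{absolute} grading gets pinned down. You claim JM2's argument for $\mathcal{F}_\mathcal{W}$ transfers ``verbatim'' to $\mathcal{W}_0$ because the underlying $4$-manifold and handle data agree and the contact pieces are Maslov-degree-$0$. This is not as immediate as you suggest: JM2's absolute normalization is tied to the identification $SFH(-S^3(K),-\Gamma_\mu)\cong\widehat{HFK}(-S^3,K)$ and its standard absolute $\mathbb{Q}$-grading, whereas the absolute $\mathbb{Z}_2$-grading on $SFH(-S^3(K),-\Gamma_0)$ is defined separately. To transfer the normalization you would need, for instance, that $\phi_{SV}$ is Maslov-degree-$0$ and that the square of Proposition \ref{behave} lets you pull the absolute shift back through --- but $\phi_{SV}$ need not be injective, and Remark \ref{maslov} (which you cite) only addresses $\phi_\pm$, not $\phi_{SV}$. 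So as written there is a gap in your absolute step.

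The paper avoids this entirely by a different and shorter argument. Having established that $G_\mathcal{C}$ is homogeneous (preserves the relative $\mathbb{Z}_2$-grading), it invokes the commutative triangle of Remark \ref{Z},
\[
\xymatrix{
HFK^{-}(-S^3,K_0)\ar[dr]_{\pi_*}\ar[rr]^{G_\mathcal{C}} &  &HFK^{-}(-S^3,K_1)\ar[dl]^{\pi_*}\\
 & \widehat{HF}(-S^3) &
}
\]
picks any $x$ with $\pi_*(x)$ generating $\widehat{HF}(-S^3)$, and reads off $gr(x)=gr(\pi_*(x))=gr(\pi_*(G_\mathcal{C}(x)))=gr(G_\mathcal{C}(x))$ using that $\pi_*$ (setting $U=1$) does not affect the $\mathbb{Z}_2$-grading. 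This anchors the absolute grading on one element, which together with relative preservation finishes the proof.
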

\begin{proof}
We first show that $G_\mathcal{C}$ preserves the relative $\mathbb{Z}_2$ grading. It is enough to show that the map $g_0:SFH(-S^3(K_0),-\Gamma_0)\to SFH(-S^3(K_1),-\Gamma_0)$ preserves the relative $\mathbb{Z}_2$ grading (see Remark \ref{maslov}). Since $g_0$ is induced by a special cobordism $\mathcal{W}_0$, the methods of \cite[Section 6]{JM2} trivially carry over and show that the relative $\mathbb{Z}_2$ grading is preserved. Studying the original sutured cobordism $\mathcal{W}$, they show that 1,2, and 3 handle attachment maps (see Lemmas 6.4, 6.5, and 6.6) preserve the relative $\mathbb{Q}$ grading on $\widehat{HFK}$.

We are left to show that $G_\mathcal{C}$ preserves the absolute $\mathbb{Z}_2$ grading of some element. Consider the commutative diagram from Remark \ref{Z}
\[
\xymatrix{
HFK^{-}(-S^3,K_0)\ar[dr]_{\pi_*}\ar[rr]^{G_\mathcal{C}} &  &HFK^{-}(-S^3,K_1)\ar[dl]^{\pi_*}\\
 & \widehat{HF}(-S^3) & 
}
\]
Let $x\in HFK^{-}(-S^3,K_0)$ be an element such that $\pi_* (x)$ is a generator of $\widehat{HF}(-S^3)$. Let $gr(x)$ denote the absolute $\mathbb{Z}_2$ grading of $x$. Then we have
\[
gr(x) = gr(\pi_* (x)) = gr(\pi_*(G_\mathcal{C} (x))) = gr(G_\mathcal{C} (x))
\]
since the $U$ action has no effect on the $\mathbb{Z}_2$ grading.

\begin{conjecture*}
The map $G_\mathcal{C}:HFK^{-}(-S^3,K_0)\to HFK^{-}(-S^3,K_1)$ associated to a decorated concordance $\mathcal{C}$ preserves the absolute $\mathbb{Q}$-Maslov grading.
\end{conjecture*}

\end{proof}

%%%%%%%%%%%%%%%%%%%%%%%%%%%%%%%%%%%%%%%%%%%%%%%%%%%%%%%%%%%%%%%%%%% %
% reference
%%%%%%%%%%%%%%%%%%%%%%%%%%%%%%%%%%%%%%%%%%%%%%%%%%%%%%%%%%%%%%%%%%%% 
\newcommand{\noop}[1]{}
\providecommand{\bysame}{\leavevmode\hbox to3em{\hrulefill}\thinspace}
\providecommand{\MR}{\relax\ifhmode\unskip\space\fi MR }
% \MRhref is called by the amsart/book/proc definition of \MR.
\providecommand{\MRhref}[2]{%
  \href{http://www.ams.org/mathscinet-getitem?mr=#1}{#2}
}
\providecommand{\href}[2]{#2}

\end{document}